\newtheorem{theorem}{Theorem}
\newtheorem{lemma}[theorem]{Lemma}
\newtheorem{remark}[theorem]{Remark}
\def\be#1{\begin{equation}\label{#1}}
\def\ee{\end{equation}}
\def\be#1{\begin{equation}\label{#1}}
 \newcommand{\s}[1]{\left(#1\right)}
 \newcommand{\n}[1]{\left\|#1\right\|}
 \renewcommand{\a}[1]{\left|#1\right|}
\begin{document}
\title [Heuristic parameter choice in Tikhonov regularization]{Q-curve 
and area rules for choosing heuristic parameter in Tikhonov regularization}
\author{Toomas Raus and Uno H\"amarik}
\address{Institute of  Mathematics and Statistics, University of Tartu, Estonia}
\ead{toomas.raus@ut.ee, uno.hamarik@ut.ee}
\begin{abstract}
We consider choice of the regularization parameter in Tikhonov method if the noise level of the data is unknown.  One of the best rules for the heuristic parameter choice is the quasi-optimality criterion where the parameter is chosen as the global minimizer of the quasi-optimality function. In some problems this rule fails.
We prove that one of the local minimizers of the quasi-optimality function is always a good regularization parameter. For choice of the proper local minimizer we propose to construct 
the Q-curve which is the analogue of the L-curve, but on x-axis we use modified discrepancy instead of discrepancy and on the y-axis the quasi-optimality function instead of 
the norm of the approximate solution. In area rule we  choose for the regularization parameter such local minimizer of the quasi-optimality function for which the area of polygon, connecting on Q-curve this minimum point with certain maximum points, is maximal. We also provide a posteriori error estimates of the approximate solution, which allows to check the reliability of parameter chosen heuristically. Numerical experiments on extensive set of test problems confirm that the proposed rules give  much better results than  previous heuristic rules.  Results of proposed rules are comparable with results of the discrepancy principle and the monotone error rule, if last two rules use the exact noise level. 
\end{abstract}
\noindent{\it Keywords\/}: ill-posed problem, Tikhonov regularization, unknown noise level, regularization parameter choice, heuristic rule, quasi-optimality function 
\section{Introduction}

Let  $A\in{\cal L}(H,F)$ be a linear bounded operator between real Hilbert spaces $H$, $F$. We are interested in finding the minimum norm solution $u_*$ of the equation
\begin{equation}\label{mitteek_1}
Au=f_*,\qquad f_*\in {\cal R}(A),
\end{equation}
where noisy data $f\in F$ are given instead of the exact data $f_*$.
The range ${\cal R}(A)$ may be non-closed and the kernel ${\cal N}(A)$ may be non-trivial, so in general this problem is ill-posed. 
We consider solution of the problem $Au=f$ by 
Tikhonov method (see \cite{EHN, VaiVer}) where regularized solutions in cases of exact and inexact data have corresponding forms
\[
u^+_{\alpha}=\s{\alpha I+A^*A}^{-1}A^*f_*, \qquad u_{\alpha}=\s{\alpha I+A^*A}^{-1}A^*f
\]
and $\alpha>0$ is the regularization parameter. 
Using the well-known estimate $\n{u_{\alpha}-u^+_{\alpha}} \leq \frac12 {\alpha}^{-1/2}\n{f-f_*}$ (see \cite{EHN, VaiVer}) and notations
\be{e1}
e(\alpha):=\n{u_{\alpha}-u_*}, \qquad  e_1(\alpha):=\n{u^+_{\alpha}-u_*}+\n{u_{\alpha}-u^+_{\alpha}},
\ee
\[e_2(\alpha,\n{f-f_*}):=\n{u^+_{\alpha}-u_*}+\frac{1}{2 \sqrt \alpha} \n{f-f_*},\]
we have the error estimates
\be{err-est}
e(\alpha) \leq e_1(\alpha) \leq e_2(\alpha,\n{f-f_*}).
\ee 
We consider choice of the regularization parameter if the noise level for $\n{f-f_*}$ is unknown. The parameter choice rules which do not use the noise level information are called heuristic rules. Well known heuristic rules are the quasi-optimality criterion \cite{BaKi8, HPR09, Ki, Ki13, KiNe, Ne, TGK}, L-curve rule \cite{Ha92, Ha94}, GCV-rule \cite{GHW}, Hanke-Raus rule \cite{HR96}, Reginska's rule \cite{Re}; about other rules see \cite{HoReRo, Pa}. 
Heuristic rules are numerically compared in \cite{BaLu, HPR09, HoReRo, Pa}. The heuristic rules give good results in many problems, but it is not possible to construct heuristic rule guaranteeing convergence $\n{u_{\alpha}-u_*} \to 0$ as the noise level goes to zero (see \cite{Bak}). All heuristic rules may fail in some problems and without additional information about the solution, it is difficult to decide, is the obtained parameter reliable or not.

In the quasi-optimality criterion parameter $\alpha$ is chosen as the global minimizer of the function $\psi_{\mathrm{Q}}(\alpha)=\alpha\n{\frac{du_\alpha}{d\alpha}}$ on certain interval $[\alpha_N, \alpha_0]$. We propose to choose parameter from the 
set $L_{\mathrm{min}}$ of 
local minimizers of this function from certain set $\Omega$ of parameters.

We will call the parameter $\alpha_{\mathrm{R}}$ in arbitrary rule R as pseudooptimal, if 
\[\n{u_{\alpha_{\mathrm{R}}}-u_*} \leq c \, \min_{\alpha>0} e_1(\alpha)\] 
with relatively small constant $c$
and we show that at least one 
parameter from set $L_{\mathrm{min}}$
has this property. 
For the choice of proper parameter from the 
set $L_{\mathrm{min}}$ 
some algorithms were proposed in \cite{RH18}, in the current work we propose other algorithms. 
We propose to construct Q-curve which is the analogue of the L-curve \cite{Ha92}, but on x-axis we use modified discrepancy instead of discrepancy and on the y-axis the function $\psi_{\mathrm{Q}}(\alpha)$ instead of 
$\|u_\alpha\|.$
For finding proper local minimizer of the function $\psi_{\mathrm{Q}}(\alpha)$ 
we propose the area rules on the Q-curve. The idea of proposed rules is that we form for every minimizer of the function $\psi_{\mathrm{Q}}(\alpha)$ certain function which approximates the error of the approximate solution and has one minimizer; we choose for the regularization parameter such local minimizer of $\psi_{\mathrm{Q}}(\alpha)$ for which the area of polygon, connecting this minimum point with certain maximum points, is maximal.

The plan of this paper is as follows. In Section 2 we consider known rules for choice of the regularization parameter, both in case of known and unknown noise level. In Section 3 we 
prove that the set $L_{\mathrm{min}}$ contains at least one pseudooptimal parameter. In Section 4 information about used test problems (mainly from \cite{Ha94, BRS}, but also from \cite{Bk, Gr, Ha94, IndRam, Was}) and numerical experiments is given.
In Section 5 we consider the Q-curve and area rule, in Section 6 further developments of the area rule. These algorithms are also illustrated by results of numerical experiments.

\section{Rules for the choice of the regularization parameter}

\subsection{Parameter choice in the case of known noise level}
In case of known noise level $\delta, \n{f-f_*} \leq \delta$ we use one of so-called  $\delta$-rules, where certain functional $d(\alpha)$ and constant $b \geq b_0$ ($b_0$ depends on $d(\alpha)$) is chosen and such regularization parameter $\alpha(\delta)$ is chosen which satisfies 
$d(\alpha)= b \delta. $ 

1) Discrepancy principle (DP) \cite{Mo, VaiVer}: \[d_{\mathrm{D}}(\alpha):=\n{Au_{\alpha}-f} = b \delta,  \quad b \geq 1.\] 

2) Modified discrepancy principle (Raus-Gfrerer rule) \cite{Gf, Ra85}: 
\[d_{\mathrm{MD}}(\alpha):=\n{B_{\alpha}\s{Au_{\alpha}-f}} = b \delta, \quad B_{\alpha}:=\alpha^{1/2} \s{\alpha I+AA^*}^{-1/2}, \quad b \geq 1.\]
 
3) Monotone error rule (ME-rule) \cite{HKPRT, TaHa}:
\[d_{\mathrm{ME}}(\alpha):=\frac{\n{B_{\alpha}\s{Au_{\alpha}-f}}^2}{\n{B^2_{\alpha}\s{Au_{\alpha}-f}}} =  \delta.\]
The name of this rule is justified by the fact that
the chosen parameter $\alpha_{\mathrm{ME}}$ satisfies
\[ \frac{\mathrm{d}}{\mathrm{d}\alpha} \|u_\alpha - u_* \| > 0
 \quad \forall \alpha > \alpha_{\mathrm{ME}} \,.\]
Therefore $\alpha_{\mathrm{ME}} \geq \alpha_{\mathrm{opt}}:=\mbox{argmin} \|u_\alpha -u_*\|$.
 
4) Monotone error rule with post-estimation (MEe-rule) \cite{HPR09, HPR11, HPR12, Pa, RH09a}. The inequality $\alpha_{\mathrm{ME}} \geq \alpha_{\mathrm{opt}}$ suggests to use somewhat smaller parameter than $\alpha_{\mathrm{ME}}$. Extensive numerical experiments suggest to compute $\alpha_{\mathrm{ME}}$ and to use the post-estimated parameter $\alpha_{\mathrm{MEe}} :=0.4 \alpha_{\mathrm{ME}}$. Then typically $\|u_{\alpha_{\mathrm{MEe}}} - u_*\| / \|u_{\alpha_{\mathrm{ME}}} - u_*\| \in (0.7,0.9)$.
If the exact noise level is known, this MEe-rule gives typically the best results from all $\delta$-rules. 

5) Rule R1 \cite {Ra92}. Let $b > \frac{2}{3\sqrt{3}}$. Choose $\alpha(\delta)$  as the smallest solution of the equation \[d_{\mathrm{R1}}(\alpha(\delta)): =\alpha^{-1/2}\n{A^*B^2_{\alpha}\s{Au_{\alpha}-f}}=b \delta.\]
Note that this equation can be rewritten using the 2-iterated Tikhonov approximation $u_{2,\alpha}$:
\begin{equation}\label{MD}
B^2_{\alpha}\s{Au_{\alpha}-f}=Au_{2,\alpha}-f, \qquad u_{2,\alpha}=\s{\alpha I+A^*A}^{-1}(\alpha u_\alpha+A^*f).
\end{equation}

The last four rules are weakly quasioptimal rules  (see \cite{RH07}) for Tikhonov method:
if $\n{f-f_*} \leq \delta$, then
$
\n{u_{\alpha(\delta)}-u_*} \leq C(b) \inf_{\alpha>0} e_2(\alpha,\delta)
$ (see (\ref{err-est})).
The rules for the parameter choice in case of approximately given noise level are proposed and analyzed in \cite{HPR11, HPR12, Pa, RH09a}.

\subsection {Parameter choice in the case of unknown noise level}

 A classical heuristic rule is the quasi-optimality criterion. In Tikhonov method it chooses  
$\alpha=\alpha_{\mathrm{Q}}$ or $\alpha=\alpha_{\mathrm{QD}}$ as the global minimizer of corresponding functions
\begin{eqnarray} \label{qf}
\psi_{\mathrm{Q}}(\alpha)=\alpha\n{\frac{du_\alpha}{d\alpha}}={\alpha}^{-1}\n{A^*B^2_{\alpha}\s{Au_{\alpha}-f}}=\alpha \|A^*\s{\alpha I+AA^*}^{-2}f\|,  \\
\psi_{\mathrm{QD}}(\alpha)=\s{1-q}^{-1}\n{u_{\alpha}-u_{q\alpha}}, \quad 0<q<1.  \nonumber
\end{eqnarray}
The Hanke-Raus rule finds 
parameter  $\alpha=\alpha_{HR}$ as the global minimizer of the function
\[
\psi_{\mathrm{HR}}(\alpha)={\alpha}^{-1/2}\n{B_{\alpha}\s{Au_{\alpha}-f}}.
\]
In practice often L-curve is used. L-curve is log-log-plot of $\n{u_{\alpha}}$ versus $\n{Au_{\alpha}-f}$. 
The points $\s{\n{Au_{\alpha}-f},\n{u_{\alpha}}}$ have often shape similar to the letter L and parameter $\alpha_L$ which corresponds to the "corner point" is often a good parameter. In the literature several concrete rules for choice of the 'corner point'  are proposed. 
In \cite{Re}
parameter is chosen as the global minimizer of the function
 \[
\psi_{\mathrm{RE}}(\alpha)=\n{Au_{\alpha}-f}\n{u_{\alpha}}^{\tau}, \quad \tau \geq 1.
\]
(below we 
use this rule with 
$\tau=1$).  
Another rule for choice of the corner point is the maximum curvature method  (\cite{Ha98,CRS}), where such parameter $\alpha$ is chosen for which the curvature of the L-curve as the function \[\psi_{\mathrm{MC}}(\alpha)=2\frac{\hat \rho' \hat \xi ''-\hat \rho'' \hat \xi '}{((\hat \rho')^2+ (\hat \xi ')^2) ^{3/2}}\] is maximal. Here $\hat \rho',  \hat \xi ', \hat \rho'' ,  \hat \xi ''$ 
are first and second order derivatives of functions 
 $\log d_{\mathrm{D}}(\alpha)$ and $\log \n{u_{\alpha}}$. 
 
We propose also a new heuristic rule,
where the global minimizer of the function
\begin{equation} \label{WQ}
\psi_{\mathrm{WQ}}(\alpha)=d_{\mathrm{MD}}(\alpha) \psi_{\mathrm{Q}}(\alpha)
\end{equation} 
is chosen for the parameter. We call this rule as the weighted quasioptimality criterion.

In the following we will find the regularization parameter from the set of parameters
\be{omega}
\Omega=\left\{\alpha_j: \alpha_j=q\alpha_{j-1}, \quad j=1,2,...,N, \quad 0<q<1  \right\}
\ee
where $\alpha_0, q, \alpha_N$ are given. 
 If in the discretized problem the minimal eigenvalue $\lambda_{\mathrm{min}}$ of the matrix
 $A^TA$ is larger than $\alpha_N$, the heuristic rules above often choose parameter
 $\alpha_N$, which is generally not a good parameter. The works \cite{Ki13, KiNe, Ne} propose to search the global minimum of the function  $\psi_{\mathrm{Q}}(\alpha)$ in the interval $[\max{\s{\alpha_N,\lambda_{\mathrm{min}}}},\alpha_0]$. 

We say that the discretized problem
$Au=f$ does not need regularization if
\[e_1(\lambda_{\mathrm{min}}) \leq 2 \min_{\alpha \in \Omega, \ \alpha \geq \lambda_{\mathrm{min}}} e_1(\alpha).\] 
If  the discretized problem does not need regularization then $\alpha'=0$ or $\alpha'=\alpha_N$ is the proper parameter while 
for $\alpha' \leq \lambda_{min}$ we have 
\begin{eqnarray*}
 \n{u_{\alpha'}-u_*} \leq e_1(\alpha') =  \n{u^+_{\alpha'}-u_*}+\n{(\alpha' I+A^*A)^{-1}A^*(f-f_*)} \leq \\
\fl \qquad \n{u^+_{\lambda_{min}}-u_*} + 2\n{(\lambda_{min}I+A^*A)^{-1}A^*(f-f_*)} \leq 2 e_1(\lambda_{min}) \leq 
 4 \min_{\alpha \in \Omega,  \alpha \geq \lambda_{min}} e_1(\alpha).
\end{eqnarray*}
Searching the parameter from the interval $[\max{\s{\alpha_N,\lambda_{\mathrm{min}}}},\alpha_0]$ means the a priori assumption that the discretized problem needs regularization. Note that if $\lambda_{\mathrm{min}} > \alpha_N$, then in general it is not possible to decide (without additional information about solution or about noise of the data), whether the discretized problem needs regularization or not. 

\section {Local minimum points of the function  $\psi_{\mathrm{Q}}(\alpha)$}

In the following we investigate the function
 $\psi_{\mathrm{Q}}(\alpha)$ in (\ref{qf}) and show that at least one local minimizer of this function is the pseudooptimal parameter. 
We need some preliminary results. 

\begin{lemma}
The functions  $\psi_{\mathrm{Q}}(\alpha)$, $\psi_{\mathrm{QD}}(\alpha)$ satisfy for each  $\alpha > 0$ the estimates 
\begin{equation} \label{eq3}
\psi_{\mathrm{Q}}(\alpha) \leq e_1(\alpha),
\end{equation}
\begin{equation} \label{eq3a}
\psi_{\mathrm{QD}}(\alpha) \leq q^{-1} e_1(\alpha),
\end{equation}
\begin{equation*} 
\psi_{\mathrm{Q}}(\alpha) \leq \psi_{\mathrm{QD}}(\alpha) \leq q^{-1}\psi_{\mathrm{Q}}(q\alpha).
\end{equation*}
\end{lemma}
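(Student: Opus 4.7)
My approach is to reduce each inequality to an elementary pointwise estimate under the spectral calculus of $A^*A$. Write $R_\alpha := \s{\alpha I+A^*A}^{-1}$ and let $\h{E_\lambda}$ denote the spectral resolution of $A^*A$. Two operator-form expressions are needed at the outset. Differentiating $u_\alpha = R_\alpha A^*f$ in $\alpha$ gives $du_\alpha/d\alpha = -R_\alpha^2 A^*f = -R_\alpha u_\alpha$, hence $\psi_{\mathrm{Q}}(\alpha) = \alpha\n{R_\alpha u_\alpha}$. The resolvent identity $R_\alpha - R_{q\alpha} = -(1-q)\alpha R_\alpha R_{q\alpha}$ together with the commutativity of $R_\alpha$ and $R_{q\alpha}$ yields $u_\alpha - u_{q\alpha} = -(1-q)\alpha R_{q\alpha} u_\alpha$, so $\psi_{\mathrm{QD}}(\alpha) = \alpha\n{R_{q\alpha} u_\alpha}$.

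For (\ref{eq3}) I would split $u_\alpha = u^+_\alpha + (u_\alpha - u^+_\alpha)$ and bound each piece. Because $\n{\alpha R_\alpha} = \sup_{\lambda\geq 0}\alpha/(\alpha+\lambda) \leq 1$, the noise piece obeys $\n{\alpha R_\alpha(u_\alpha - u^+_\alpha)} \leq \n{u_\alpha - u^+_\alpha}$. For the approximation piece, since $u_*$ is the minimum-norm solution we have $u_* \in \overline{{\cal R}(A^*)}$ and the algebraic identity $R_\alpha A^*A = I - \alpha R_\alpha$ gives $u^+_\alpha - u_* = -\alpha R_\alpha u_*$; thus both $\alpha R_\alpha u^+_\alpha$ and $u^+_\alpha - u_*$ are spectral integrals against the common measure $d\n{E_\lambda u_*}^2$, and the pointwise comparison $\alpha\lambda/(\alpha+\lambda)^2 \leq \alpha/(\alpha+\lambda)$ (equivalent to $\lambda \leq \alpha + \lambda$) yields $\n{\alpha R_\alpha u^+_\alpha} \leq \n{u^+_\alpha - u_*}$. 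Summing gives (\ref{eq3}).

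For (\ref{eq3a}) I would repeat the same split, now with $R_{q\alpha}$ replacing $R_\alpha$. Here $\n{\alpha R_{q\alpha}} \leq q^{-1}$ handles the noise piece, and the pointwise estimate $\alpha\lambda/[(q\alpha+\lambda)(\alpha+\lambda)] \leq q^{-1}\alpha/(\alpha+\lambda)$, which reduces to $q\lambda \leq q\alpha+\lambda$ (true for $q<1$, $\lambda\geq 0$), handles the approximation piece. Both terms acquire the factor $q^{-1}$, giving $\psi_{\mathrm{QD}}(\alpha) \leq q^{-1}e_1(\alpha)$.

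The final chain $\psi_{\mathrm{Q}}(\alpha) \leq \psi_{\mathrm{QD}}(\alpha) \leq q^{-1}\psi_{\mathrm{Q}}(q\alpha)$ is the easiest: all three quantities are square roots of integrals against the common measure $d\n{E_\lambda A^*f}^2$, with integrands $\alpha^2/(\alpha+\lambda)^4$, $\alpha^2/[(\alpha+\lambda)^2(q\alpha+\lambda)^2]$ and $\alpha^2/(q\alpha+\lambda)^4$ respectively (the last because $q^{-1}\psi_{\mathrm{Q}}(q\alpha) = \alpha\n{R_{q\alpha}^2 A^*f}$), and the monotone ordering is immediate from $q\alpha+\lambda \leq \alpha+\lambda$. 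I foresee no serious obstacle: the proof is routine once the two operator-form expressions for $\psi_{\mathrm{Q}}$ and $\psi_{\mathrm{QD}}$ are in hand; the only care needed is to invoke the minimum-norm property of $u_*$ when writing $u^+_\alpha - u_*$ as a spectral integral, and to track the $q^{-1}$ factor consistently in the second estimate.
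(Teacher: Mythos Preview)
Your proof is correct and follows essentially the same route as the paper: split into the approximation term and the noise term via $A^*f = A^*Au_* + A^*(f-f_*)$ (equivalently your split $u_\alpha = u^+_\alpha + (u_\alpha - u^+_\alpha)$), then bound each piece by the corresponding summand of $e_1(\alpha)$ using $\n{\alpha R_\alpha}\le 1$ and $\n{A^*AR_\alpha}\le 1$; the chain inequality is obtained in both proofs from the monotone ordering $(q\alpha+\lambda)^{-1}\ge(\alpha+\lambda)^{-1}$. The only stylistic difference is that you phrase the bounds via explicit pointwise spectral inequalities whereas the paper uses the equivalent operator-norm form, and your remark that the minimum-norm property of $u_*$ is needed for $u^+_\alpha - u_* = -\alpha R_\alpha u_*$ is in fact unnecessary: that identity is purely algebraic once $A^*f_* = A^*Au_*$, which holds for any solution of $Au=f_*$.
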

\begin{proof}
Using relations $f=Au_*+A^*(f-f_*)$,
\[ {u_{\alpha}-u_{q\alpha}=
\s{q-1}\alpha \s{\alpha I+A^*A}^{-1}}\s{q\alpha I+A^*A}^{-1} A^*f, \]
 \[\n{A^*A\s{\alpha I+A^*A}^{-1}} \leq 1, \quad \alpha \n{\s{\alpha I+A^*A}^{-1}} \leq 1 \]
we have
\begin{eqnarray*}
\psi_{\mathrm{Q}}(\alpha)&=&
\alpha \|A^*\s{\alpha I+AA^*}^{-2}f\|= 
\alpha \|(\alpha I+A^*A)^{-2}A^*f\| \\
&\leq&
\alpha \n{A^*A\s{\alpha I+A^*A}^{-2}u_*}+\alpha \n{\s{\alpha I+A^*A} ^{-2}A^*(f-f_*)}\\
&\leq&
\alpha \| (\alpha I+A^*A)^{-1}u_*\| +\|(\alpha I+A^*A) ^{-1}A^*(f-f_*)\|=  e_1(\alpha),\\ 
\psi_{\mathrm{QD}}(\alpha) &\leq&
\alpha \n{A^*A(q\alpha I+A^*A)^{-1})(\alpha I+A^*A)^{-1})u_*}\\
& &+\alpha \n{(q\alpha I+A^*A)^{-1})(\alpha I+A^*A)^{-1})A^*(f-f_*)} \leq q^{-1} e_1(\alpha),\\
\psi_{\mathrm{Q}}(\alpha)&=&\alpha  \n{\s{\alpha I+A^*A}^{-2}A^*f} \leq \alpha  \n{\s{\alpha I+A^*A}^{-1} \s{q\alpha I+A^*A}^{-1} A^*f} \\
&=& \psi_{\mathrm{QD}}(\alpha) \leq \alpha  \n{\s{q\alpha I+A^*A}^{-2}A^*f}=q^{-1}\psi_{\mathrm{Q}}(q\alpha).
\end{eqnarray*}
 \end{proof}
\begin{remark} Note that $\lim_{\alpha \rightarrow \infty} \psi_{\mathrm{Q}}(\alpha) =0 $, but  $\lim_{\alpha \rightarrow \infty} e_1(\alpha) =\n{u_*}$. 
Therefore in the case of too large $\alpha_0$ this $\alpha_0$ may be global (or local) minimizer of the function $\psi_{\mathrm{Q}}(\alpha)$. We recommend to take 
$\alpha_0=c\n{A^*A}, \ c \leq 1$ or to minimize the function  $\tilde{\psi}_{\mathrm{Q}}(\alpha):=(1+\alpha / \n{A^*A}) \psi_{\mathrm{Q}}(\alpha)$
instead of $\psi_{\mathrm{Q}}(\alpha)$. Due to limit
$\lim_{\alpha \rightarrow 0} (1+\alpha / \n{A^*A})=1$ the function $\tilde{\psi}_{\mathrm{Q}}(\alpha)$ approximately satisfies  (\ref{eq3}) for small $\alpha$.
\end{remark}

In the following we define the local minimum points of the function  $\psi_{\mathrm{Q}}(\alpha) $ on the set $\Omega$ (see (\ref{omega})).
We say that the parameter $\alpha_k ,\ 0 \leq k \leq N-1 $ is the local minimum point of the sequence $\psi_{\mathrm{Q}}(\alpha_k) $, if  $\psi_{\mathrm{Q}}(\alpha_k) <\psi_{\mathrm{Q}}(\alpha_{k+1})$ and in case $k>0 $ there exists index $j \geq 1$ such, that  $\psi_{\mathrm{Q}}(\alpha_k) =\psi_{\mathrm{Q}}(\alpha_{k-1}) =...=\psi_{\mathrm{Q}}(\alpha_{k-j+1}) <\psi_{\mathrm{Q}}(\alpha_{k-j})$.
 The parameter $\alpha_N$ is the local minimum point if there exists index $j \geq 1$ so, that  
\[\psi_{\mathrm{Q}}(\alpha_N) =\psi_{\mathrm{Q}}(\alpha_{N-1}) =...=\psi_{\mathrm{Q}}(\alpha_{N-j+1}) <\psi_{\mathrm{Q}}(\alpha_{N-j}).\]  
Denote the local minimum points by $m_k$, $k=1,\ldots,K$ ($K$ is the number of minimum points) and corresponding set by
$
L_{\mathrm{min}}=\left\{m_k: m_1>m_2>...>m_K\right\}.
$

The parameter $\alpha_k ,\ 0 < k < N $ is the local maximum point of the sequence $\psi_{\mathrm{Q}}(\alpha_k) $ if  $\psi_{\mathrm{Q}}(\alpha_k) >\psi_{\mathrm{Q}}(\alpha_{k+1})$ and there exists index $j \geq 1$ so, that  \[\psi_{\mathrm{Q}}(\alpha_k) =\psi_{\mathrm{Q}}(\alpha_{k-1})  =...=\psi_{\mathrm{Q}}(\alpha_{k-j+1}) >\psi_{\mathrm{Q}}(\alpha_{k-j}).\]
We denote by $M_k$ the local maximum point between the local minimum points  $m_{k+1}$ and  $m_k,\  1 \leq k \leq K-1$. Denote $M_0=\alpha_0$, $M_K=\alpha_N $. Then by the construction
\[
  M_K \leq m_K<M_{K-1}<\ldots<m_2<M_{1}<m_1 \leq M_0.
\]
\begin{theorem}
The following estimates hold for the local minimizers of the function $\psi_{\mathrm{Q}}(\alpha) $.
\begin{enumerate}
\item If $\alpha_0=\n{A^*A}$, $ \alpha_N =\alpha_0 \s{\frac{\n{f-f_*}}{\n{f_*}}}^2$, then 
\begin{equation} \label{eq61}
\fl \min_{\alpha \in L_{\mathrm{min}}} \n{u_{\alpha}-u_*} \leq  q^{-1}(1+2\max\{1,c_q\mid \ln \frac{\n{f-f_*}}{2\n{A}\n{u_*}}\mid\})\min_{\alpha>0}e_2(\alpha,\n{f-f_*}),
\end{equation}
where  $c_q:=\s{q^{-1}-1}/\ln{q^{-1}} \to 1\mbox{  if  } q \to 1$. 

Moreover, if $u_*=\a{A}^pv$, $\n{v}\leq\rho$, $p>0$, where $|A|:=(A^*A)^{1/2}$, then 
\begin{equation} \label{eq7}
\min_{\alpha \in L_{\mathrm{min}}} \n{u_{\alpha}-u_*} \leq   c_{p,q} \rho^\frac1{p+1}\a{\ln{\n{f-f_*}}}\n{f-f_*}^\frac p{p+1}, 0<p\leq 2.
\end{equation}
\item For arbitrary $\alpha_0, \alpha_N$ we have
\begin{equation} \label{eq6} 
 \min_{\alpha \in L_{\mathrm{min}}} \n{u_{\alpha}-u_*} \leq q^{-1}C \min_{\alpha_N \leq \alpha \leq \alpha_0} e_1(\alpha),   
\end{equation} 
\begin{equation*} 
\fl C:=1+\max_{1 \leq k \leq K} \ \max_{\alpha_j  \in \Omega, \, M_k \leq \alpha_j \leq M_{k-1}} T\s{m_k, \alpha_j} \leq 1+c_q\ln \s{\frac{\alpha_0}{\alpha_N}}, \ T(\alpha, \beta):=\frac{\n{u_{\alpha}-u_{\beta}}}{\psi_{\mathrm{Q}}(\beta)}.
\end{equation*} 
\end{enumerate}
\end{theorem}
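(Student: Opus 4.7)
The plan is to combine three facts from the preceding Lemma, namely $\psi_{\mathrm{Q}}(\alpha)\leq e_1(\alpha)$, the identity $(1-q)\psi_{\mathrm{QD}}(\alpha)=\|u_\alpha-u_{q\alpha}\|$, and $\psi_{\mathrm{QD}}(\alpha)\leq q^{-1}\psi_{\mathrm{Q}}(q\alpha)$, with a telescoping argument along the geometric grid $\Omega$ and the triangle inequality $\|u_{m_k}-u_*\|\leq\|u_{m_k}-u_{\alpha_j}\|+\|u_{\alpha_j}-u_*\|$ evaluated at a grid point close to a continuous near-minimiser of $e_1$.

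For part (2), fix $\hat\alpha\in[\alpha_N,\alpha_0]$ minimising $e_1$ on this interval and choose the unique grid point $\alpha_j\in\Omega$ with $\alpha_j\leq\hat\alpha<q^{-1}\alpha_j$. This $\alpha_j$ sits in some pair $[M_{k^*},M_{k^*-1}]$; take the corresponding $m_{k^*}\in L_{\mathrm{min}}$ as the candidate. By the definition of $m_{k^*}$ as a local minimiser, $\psi_{\mathrm{Q}}$ is monotone on each of the two half-chains of grid points in $[M_{k^*},m_{k^*}]$ and $[m_{k^*},M_{k^*-1}]$, with its minimum at $m_{k^*}$. Using $e(\alpha_j)\leq e_1(\alpha_j)$ and $\psi_{\mathrm{Q}}(\alpha_j)\leq e_1(\alpha_j)$,
\[
\|u_{m_{k^*}}-u_*\|\leq T(m_{k^*},\alpha_j)\,\psi_{\mathrm{Q}}(\alpha_j)+e_1(\alpha_j)\leq C\,e_1(\alpha_j).
\]

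The heart of the argument is the bound on $T$. I expand $u_{m_{k^*}}-u_{\alpha_j}$ as a telescoping sum over consecutive grid differences $\|u_{\alpha_i}-u_{q\alpha_i}\|=(1-q)\psi_{\mathrm{QD}}(\alpha_i)\leq(1-q)q^{-1}\psi_{\mathrm{Q}}(q\alpha_i)$, walking from $\alpha_j$ towards $m_{k^*}$. Because this chain lies entirely inside the monotone half-interval containing both endpoints, every intermediate value of $\psi_{\mathrm{Q}}$ is dominated by $\psi_{\mathrm{Q}}(\alpha_j)$, and the number of steps is at most $\log(\alpha_j/m_{k^*})/\log(q^{-1})$. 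This gives $T(m_{k^*},\alpha_j)\leq c_q\log(\alpha_j/m_{k^*})$ and hence $C\leq 1+c_q\log(\alpha_0/\alpha_N)$. The extra factor $q^{-1}$ in the statement is produced by a discretisation lemma $e_1(\alpha_j)\leq q^{-1}e_1(\hat\alpha)$: the bias $\|u_\alpha^+-u_*\|$ is nondecreasing in $\alpha$ (immediate from the spectral representation $\alpha^2(\alpha+\lambda)^{-2}$), so for $\alpha_j\leq\hat\alpha$ it is already bounded by the value at $\hat\alpha$; and the termwise estimate $\alpha_j+\lambda\geq q(\hat\alpha+\lambda)$, valid since $\alpha_j\geq q\hat\alpha$ and $\lambda\geq 0$, yields $\|u_{\alpha_j}-u_{\alpha_j}^+\|\leq q^{-1}\|u_{\hat\alpha}-u_{\hat\alpha}^+\|$.

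Part (1) is then a specialisation: with $\alpha_0=\|A^*A\|$ and $\alpha_N=\alpha_0(\|f-f_*\|/\|f_*\|)^2$, the logarithm $\log(\alpha_0/\alpha_N)$ equals $2\log(\|f_*\|/\|f-f_*\|)$, which together with $\|f_*\|\leq\|A\|\|u_*\|$ and $e_1\leq e_2$ produces (\ref{eq61}) after absorbing the small-log regime into the constant. The source-condition rate (\ref{eq7}) follows from (\ref{eq61}) and the classical optimal-order estimate $\min_{\alpha>0}e_2(\alpha,\|f-f_*\|)\leq c_p\rho^{1/(p+1)}\|f-f_*\|^{p/(p+1)}$ valid under $u_*=|A|^pv$. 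The main obstacle is keeping the telescoping chain within the monotone half-interval of $\psi_{\mathrm{Q}}$ (the estimate $\psi_{\mathrm{Q}}(q^{\pm i}\alpha_j)\leq\psi_{\mathrm{Q}}(\alpha_j)$ would fail as soon as one crosses $M_{k^*}$ or $M_{k^*-1}$), and book-keeping the several $q^{-1}$ factors generated in turn by the Lemma's bound $\psi_{\mathrm{QD}}\leq q^{-1}\psi_{\mathrm{Q}}(q\cdot)$ and by the grid discretisation.
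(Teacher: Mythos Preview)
Your argument for part (2) is correct and is essentially the paper's own proof: the same triangle inequality $\|u_{m_k}-u_*\|\leq(1+T(m_k,\alpha_j))e_1(\alpha_j)$, the same telescoping over grid steps bounded via $\|u_{\alpha_i}-u_{q\alpha_i}\|\leq(q^{-1}-1)\psi_{\mathrm Q}(q\alpha_i)$, the same monotonicity of $\psi_{\mathrm Q}$ on each half-interval $[M_k,m_k]$ and $[m_k,M_{k-1}]$, and the same discretisation step $e_1(\alpha_j)\leq q^{-1}e_1(\hat\alpha)$. The only cosmetic difference is that the paper first minimises $e_1$ over the grid $\Omega$ and then passes to the continuous interval, whereas you go the other way round.

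For part (1), however, there is a genuine gap. You write that ``$e_1\leq e_2$ produces (\ref{eq61})'', but this inequality only yields
\[
\min_{\alpha_N\leq\alpha\leq\alpha_0}e_1(\alpha)\ \leq\ \min_{\alpha_N\leq\alpha\leq\alpha_0}e_2(\alpha,\|f-f_*\|),
\]
not the required bound by $\min_{\alpha>0}e_2(\alpha,\|f-f_*\|)$. To close the gap you must show that the global minimiser $\alpha_*$ of $e_2(\cdot,\|f-f_*\|)$ lies in $[\alpha_N,\alpha_0]$, or else handle the boundary cases. The paper does exactly this: it first verifies $\alpha_*\geq\alpha_N$ (using that $e_2(\alpha)>\|u_*\|\geq e_2(\alpha_*)$ for $\alpha<\alpha_N$ by the specific choice $\alpha_N=\alpha_0(\|f-f_*\|/\|f_*\|)^2$), and then treats the case $\alpha_*>\alpha_0$ by a separate short argument based on $e_2(\alpha_*)\geq\|u^+_{\alpha_0}-u_*\|\geq\tfrac12\|u_*\|$, which gives directly $\|u_{m_k}-u_*\|\leq 3\,e_2(\alpha_*)$ for every local minimiser $m_k$. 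Your sketch does not account for either step, and the phrase ``absorbing the small-log regime into the constant'' does not cover them: the case $\alpha_*>\alpha_0$ is precisely a large-noise regime where the logarithm can be small but the comparison $\min_{[\alpha_N,\alpha_0]}e_1\leq\min_{\alpha>0}e_2$ need not hold without the extra argument.
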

\begin{proof}
For arbitrary parameters $\alpha \geq 0, \ \beta \geq 0$ the inequalities
\[\n{u_{\alpha}-u_*} \leq \n{u_{\alpha}-u_{\beta}} + \n{u_{\beta}-u_*} \leq  
T(\alpha, \beta)\psi_{\mathrm{Q}}(\beta)+e_1(\beta)\]
and (\ref{eq3}) lead to the estimate
\begin{equation} \label {eq4}
\n{u_{\alpha}-u_*} \leq \s{1+T(\alpha, \beta)}e_1(\beta).
\end{equation}
It is easy to see that 
\begin {equation}  \label {eq5}
\min_{\alpha_j \in \Omega} e_1(\alpha_{j}) \leq q^{-1}\min_{\alpha_N \leq \alpha \leq \alpha_0} e_1(\alpha),
\end {equation}
while in case $ q\alpha \leq \alpha' \leq \alpha$ we have $e_1\s{\alpha'} \leq  q^{-1} e_1\s{\alpha}$. 

Let ${\alpha}_{j*}=\alpha_0 q^{j*}$ be the global minimizer of the function $e_1(\alpha)$ on the set of the parameters $\Omega$. 
Then ${\alpha}_{j*} \in [M_k,M_{k-1}] $ for some $k, \ 1  \leq k \leq K$ and this $k$ defines index $m$ with $m_k=\alpha_m$.
From  (\ref{eq4})  we get the estimate
\begin {eqnarray*}
\fl \qquad \qquad \n{u_{m_k}-u_*} \leq \s{1+T(m_k, \alpha_{j*})}e_1(\alpha_{j*}) \leq  \s{1+\min_{M_k \leq \alpha_j \leq M_{k-1}}T(m_k, \alpha_{j})} \min_{\alpha_j \in \Omega} e_1(\alpha_{j})
\end {eqnarray*}
which together with (\ref{eq5}) gives also estimate (\ref{eq6}). 

Now we show that  $C \leq 1+c_q\ln \s{\frac{\alpha_0}{\alpha_N}}$. 
If  $m_k \leq \alpha_{j} \leq M_{k-1}$, Lemma 1 enables to estimate 
\[\n{u_{\alpha_m}-u_{\alpha_j}} \leq \sum_{j \leq i \leq m-1}\n{u_{i}-u_{i+1}} \leq q^{-1} (1-q) \sum_{j \leq i \leq m-1} \psi_{\mathrm{Q}}(\alpha_{i+1}),\]
\begin {eqnarray*}
 T(m_k, \alpha_{j}) &=&\frac{\n{u_{\alpha_m}-u_{\alpha_j}}}{\psi_{\mathrm{Q}}(\alpha_{j})} \leq
 q^{-1} (1-q)\sum_{j \leq i \leq m-1} \frac{\psi_{\mathrm{Q}}(\alpha_{i+1})}{\psi_{\mathrm{Q}}(\alpha_{j})} \\ &\leq&  (q^{-1}-1)(m-j) \leq  (q^{-1}-1) N= \frac{(q^{-1}-1)}{\ln{q^{-1}}}\ln{\frac{\alpha_0}{\alpha_N}}=c_q\ln{\frac{\alpha_0}{\alpha_N}}.
\end {eqnarray*}

If $M_k \leq \alpha_{j} \leq m_k$, then analogous estimation of  $T(m_k, \alpha_{j})$  gives the same result. 

Now we prove the estimate (\ref{eq61}). For the global minimum point $\alpha_{*}$ of the function $e_2(\alpha, \n{f-f_*})$ the inequality $\alpha_{*} \geq \alpha_N$ holds, while 
for $\alpha<\alpha_N$ we have 
\[e_2(\alpha^*) \leq \|u_*\|=\|f-f_*\|/(2\sqrt{\alpha_N}) \leq \|f-f_*\|/(2\sqrt{\alpha}) < e_2(\alpha).\]
In the case $\alpha_* \leq \alpha_0$ we get similarly as in the proof of estimate (\ref{eq6}) that
\[\min_{\alpha \in L_{\mathrm{min}}} \n{u_{\alpha}-u_*} \leq  q^{-1}(1+c_q\ln{\frac{\alpha_0}{\alpha_N})\min_{\alpha>0}e_2(\alpha,\n{f-f_*})};\]
due to $\ln{\frac{\alpha_0}{\alpha_N}}=\mid \ln \n{f-f_*}/{\n{f_*}} \mid$ 
the estimate (\ref{eq61}) holds. Consider the case  $\alpha_* > \alpha_0$. Then \[e_2(\alpha_*,\n{f-f_*})  \geq \n{u^+_{\alpha_0}-u_*} \geq \frac{\alpha_0}{\alpha_0+\n{A^*A}}\n{u_*} =  \frac{\n{u_*}}{2}.\] and for each local minimum point  $m_k, \alpha_N \leq m_k \leq \alpha_0$ the inequalities
\begin {eqnarray*}
\n{u_{m_k}-u_*} \leq e_2(m_k,\n{f-f_*}) \leq \n{u^+_{\alpha_0}-u_*}+0.5{\alpha_N}^{-1/2} \n{f-f_*} =\\ \n{u^+_{\alpha_0}-u_*}+\n{u_*} 
\leq 3 \n{u^+_{\alpha_0}-u_*} \leq 3 e_2(\alpha_*,\n{f-f_*}) 
\end {eqnarray*}
hold. Therefore the inequality (\ref{eq61}) holds also in this case.

For source-like solution $u_*=\a{A}^pv$, $\n{v}\leq\rho$, $p>0$ the error estimate
\begin {eqnarray*}
\min_{\alpha_N \leq \alpha \leq \alpha_0} e_1(\alpha) \leq c_p {\rho}^{1/(p+1)}{\n{f-f_*}}^{p/(p+1)},  0 < p \leq 2
\end {eqnarray*}
is well-known (see \cite{EHN, VaiVer})  and the estimate  (\ref{eq7}) follows immediately from (\ref{eq61}).
\end{proof}
\begin {remark}
Theorem 3 holds also in the case if the equation $Au=f_*$ has only the quasisolution, i.e. in the case $f_* \notin {\cal R}(A)$, $Qf_* \in {\cal R}(A)$,  where $Q$ is the orthoprojector $F \rightarrow \overline{{\cal R}(A)}$.
\end {remark}
\begin {remark}
The inequality (\ref{eq6}) holds also in the case if the noise of the $f$ is not finite but $\min_{\alpha_N \leq \alpha \leq \alpha_0} e_1(\alpha)$ is finite (this holds if $\n{A^*(f - f_*)}$ is finite).  
\end {remark}
\begin {remark}
Use of the inequality (\ref{eq3a}) enables to prove the analogue of Theorem 3 for set $L_{\mathrm{min}}$ of local minimizers of the function $\psi_{\mathrm{QD}}(\alpha)$: then the inequality (\ref{eq6}) holds, where  $T(\alpha,\beta)=q^{-1} \frac{\n{u_{\alpha}-u_{\beta}}}{\psi_{QD}(\beta)}$.  
\end {remark}

In choice of the regularization parameter we may exclude from the observation some local minimizers. It is natural to assume that $\alpha_N$ is so small that 
\begin{equation} \label{a-small}
d_{\mathrm{MD}}(\alpha_N) \leq (1+\epsilon) \n{f-f_*}
\end{equation}
 with small $\epsilon>0$. Then the following theorem holds.
\begin{theorem}
Let (\ref{a-small}) holds. Let $m_{k_0}$ be some local minimizer in  $L_{\mathrm{min}}$. Then
\[ \fl \qquad \min_{\alpha \in L_{\mathrm{min}}, \alpha \geq m_{k_0}} \n{u_{\alpha}-u_*} \leq \max\{q^{-1}C_1 \min_{\alpha \geq 0} e_1(\alpha), C_2(b,\epsilon) \min_{\alpha \geq 0}  e_2(\alpha,\n{f-f_*})\}, \]
where $b=d_{\mathrm{MD}}(m_{k_0})/d_{\mathrm{MD}}(\alpha_N) \geq 1, \quad
C_2(b,\epsilon):=b(1+\epsilon)+2$ and
\[C_1:=1+\max_{1 \leq k \leq k_0} \  \max_{\alpha_j  \in \Omega,\  M_k \leq \alpha_j \leq M_{k-1}} T\s{m_k, \alpha_j} \leq 1+c_q\ln \s{\frac{\alpha_0}{m_{k_0}}}.\] 
\end{theorem}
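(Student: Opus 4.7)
The plan is to split the argument according to whether a global minimizer $\alpha_*$ of $e_1(\alpha)$ on $[0,\infty)$ lies above or below $m_{k_0}$. Case A (with $\alpha_* \geq m_{k_0}$) produces the first term of the $\max$, while Case B (with $\alpha_* < m_{k_0}$) produces the second; whichever case holds, the minimum on the left is bounded by that term and hence by the $\max$.

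In Case A I would rerun the argument that produced (\ref{eq6}) in Theorem~3, noting that because $\alpha_* \geq m_{k_0}$, the interval $[M_k, M_{k-1}]$ containing $\alpha_*$ has index $k \leq k_0$, so the selected local minimizer $m_k$ lies in the restricted admissible set $\{m_1,\ldots,m_{k_0}\}$. Combining the triangle estimate (\ref{eq4}) with $\beta = \alpha_*$ and the discretization step (\ref{eq5}) yields $\|u_{m_k}-u_*\| \leq q^{-1}(1+T(m_k,\alpha_{j^*}))\,e_1(\alpha_*)$, and the telescoping/geometric-sum argument from Theorem~3 bounding $T(m_k,\alpha_j)$ is applied with indices $j$ only in $[m_{k_0},\alpha_0]$, giving the tighter estimate $C_1 \leq 1+c_q\ln(\alpha_0/m_{k_0})$. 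The sub-case $\alpha_* > \alpha_0$ is handled as in the proof of (\ref{eq61}), using $\|u^+_{\alpha_0}-u_*\| \geq \|u_*\|/2$ when $\alpha_0 \asymp \|A^*A\|$.

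In Case B no admissible local minimizer sits near $\alpha_*$, so I would take $m_{k_0}$ itself and bound $\|u_{m_{k_0}}-u_*\|$ directly via the modified discrepancy. The hypothesis (\ref{a-small}) combined with the definition of $b$ gives $d_{\mathrm{MD}}(m_{k_0}) = b\,d_{\mathrm{MD}}(\alpha_N) \leq b(1+\epsilon)\|f-f_*\|$, so $m_{k_0}$ satisfies the modified discrepancy inequality with constant $b(1+\epsilon)$. Invoking the weak quasioptimality of the modified discrepancy principle stated in Section~2.1 then gives $\|u_{m_{k_0}}-u_*\| \leq (b(1+\epsilon)+2)\min_{\alpha\geq 0}e_2(\alpha,\|f-f_*\|)$, which is exactly the second term of the $\max$.

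The main obstacle is producing the explicit constant $C_2(b,\epsilon) = b(1+\epsilon)+2$ in Case B rather than a larger one. This requires the standard spectral split $\|u_{m_{k_0}}-u_*\| \leq \|u^+_{m_{k_0}}-u_*\| + \|u_{m_{k_0}}-u^+_{m_{k_0}}\|$, controlling the second term by $\tfrac{1}{2}m_{k_0}^{-1/2}\|f-f_*\|$, and bounding the first by pairing the representation $u^+_{m_{k_0}}-u_* = -m_{k_0}(m_{k_0}I+A^*A)^{-1}u_*$ with the spectral form of $d_{\mathrm{MD}}(m_{k_0})$, matched at the $e_2$-optimal parameter via the usual interpolation-type argument used to prove weak quasioptimality of the MD-principle. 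Putting the two case-bounds together gives the stated estimate.
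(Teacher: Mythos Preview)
Your two-case split based solely on the position of $\alpha_*=\arg\min e_1$ relative to $m_{k_0}$ is not quite enough; the paper uses three cases, bringing in the minimizer $\alpha^*_2$ of $e_2(\cdot,\|f-f_*\|)$ as well, and the extra case is not cosmetic.

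In your Case~B you invoke the weak quasioptimality of the modified discrepancy principle from the one-sided inequality $d_{\mathrm{MD}}(m_{k_0})\leq b(1+\epsilon)\|f-f_*\|$. But the standard proof of that quasioptimality splits $\|u_\alpha-u_*\|$ into an approximation part and a propagated-noise part; the upper bound on $d_{\mathrm{MD}}$ controls the approximation part $\|u^+_\alpha-u_*\|$, while the noise part $\tfrac12\alpha^{-1/2}\|f-f_*\|$ is controlled only because the MD parameter automatically sits at or above $\alpha^*_2$. Your $m_{k_0}$ need not: in Case~B you only know $m_{k_0}>\alpha^*_1$, and it is entirely possible that $\alpha^*_1<m_{k_0}<\alpha^*_2$. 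In that regime $\tfrac12 m_{k_0}^{-1/2}\|f-f_*\|$ can exceed $\min_\alpha e_2(\alpha,\|f-f_*\|)$ by an arbitrary factor, and your sketched bound does not deliver the constant $C_2(b,\epsilon)$.

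The paper closes this gap by treating the intermediate situation $\alpha^*_1\leq m_{k_0}<\alpha^*_2$ as a separate case, using the direct estimate
\[
\|u_{m_{k_0}}-u_*\|\ \leq\ \|u^+_{\alpha^*_2}-u_*\|+\frac{\|f-f_*\|}{2\sqrt{\alpha^*_1}}\ \leq\ \min_\alpha e_2+\min_\alpha e_1,
\]
which exploits the monotonicity of $\|u^+_\alpha-u_*\|$ and the lower bound $m_{k_0}>\alpha^*_1$; only when additionally $\alpha^*_2\leq m_{k_0}$ does it run the MD-type argument to get~(\ref{eq16}). So your Case~B should itself be split in two. A smaller point: your handling of the sub-case $\alpha_*>\alpha_0$ in Case~A imports the inequality $\|u^+_{\alpha_0}-u_*\|\geq\tfrac12\|u_*\|$ from the proof of~(\ref{eq61}), but that relied on the hypothesis $\alpha_0=\|A^*A\|$, which is not assumed in the present theorem.
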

\begin{proof} Let $\alpha^*_i, \ i=1,2$  be global minimizers of the functions $e_1(\alpha)$ and $e_2(\alpha, \n{f-f_*})$  respectively. 
We consider separately 3 cases.
If $m_{k_0} \leq \alpha^*_{1}$ we get similarly to the proof of Theorem 3 the estimate 
\begin{equation} \label{eq14}
\min_{\alpha \in L_{\mathrm{min}}, \alpha \geq m_{k_0}} \n{u_{\alpha}-u_*} \leq q^{-1}C_1 \min_{\alpha_N \leq \alpha \leq \alpha_0} e_1(\alpha).
\end{equation}
If $\alpha^*_{1}  \leq m_{k_0} <\alpha^*_{2}$ we estimate 
\begin{equation} \label{eq15}
\fl \qquad  \quad \n{u_{m_{k_0}}-u_*} \leq \n{u^+_{\alpha^*_2}-u_*}+\frac{\n{f-f_*}}{2\sqrt{\alpha^*_1}} \leq \min_{\alpha \geq 0} e_2(\alpha, \n{f-f_*}) + \min_{\alpha \geq 0} e_1(\alpha).
\end{equation}
If $\alpha^*_{1}  \leq m_{k_0}$ and $\alpha^*_{2} \leq m_{k_0}$ we have 
\[\n{B_{m_{k_0}}\s{Au_{m_{k_0}}-f}} \leq b d_{\mathrm{MD}}(\alpha_N) \leq b(1+\epsilon)\n{f-f_*}\] 
and now we can prove analogically to the proof of the weak quasioptimality of the modified discrepancy principle 
(\cite{RH07}) that under assumption  $\alpha^*_{2} \leq m_{k_0}$ the error estimate
\begin{equation}  \label{eq16}
\n{u_{{m_{k_0}}}-u_*} \leq  C_2(b,\epsilon) \min_{\alpha \geq 0} e_2(\alpha, \n{f-f_*})
\end{equation}
holds. Now the assertion 1 of Theorem 7 follows from the inequalities
(\ref{eq14})-(\ref{eq16}).
\end{proof}

\section {On test problems and numerical experiments}
We made numerical experiments for local minimizers of the function $\psi_{\mathrm{Q}}(\alpha)$ using
three sets of test problems. 
The first set contains 10 well-known test problems from Regularization Toolbox \cite{Ha94} and the following 6 Fredholm integral equations of the first kind
(discretized by the midpoint quadrature formula )
\[\int_{a}^{b}K(t,s)u(s) ds=f(t), \quad  c \leq t \leq d.\] 
\begin{itemize}
\item \emph{groetsch1} \cite{Gr}:
$K(t,s)=\frac{t\exp{(-t^2/(4s))}}{2\sqrt{\pi}s^{3/2}}$, $\ 0 \leq s,t \leq 100$,
$\ u(s)=40+\\
5\cos{((100-s)/5)}+2.5\cos{(2(100-s)/2.5)}+1.25\cos{(4(100-s)/2)};$
\item \emph{groetsch2} \cite{Gr}:
$K(t,s)=\sum_{1 \leq k \leq 100}{\frac{\sin{(kt)}\sin{(ks)}}{k}}$, $\ 0 \leq s,t \leq \pi$,  
$\ u(s)=s(\pi-s);$
\item \emph{indram} \cite{IndRam} :
$K(t,s)=e^{-st}$,  $\ 0 \leq s,t \leq 1$, $\ u(s)=s$, $\ f(t)=\frac{1-(t+1)e^{-t}}{t^2};$
\item \emph{ursell} \cite{Ha94}:
$K(t,s)=\frac{1}{1+s+t}$, $\ 0 \leq s,t \leq 1$, $\ u(s)=s(1-s)$, $\ f(t)=\frac{3+2t}{2}+ \\(2+3t+t^2)\log{\s{\frac{1+t}{2+t}}};$
\item \emph{waswaz} \cite{Was}: 
$K(t,s)=\cos{(t-s)}$, $\ 0 \leq s,t \leq \pi, $
$\ u(s)=\cos{(s)}$, $\ f(t)=\frac{\pi}{2} \cos{(t)};$
\item \emph{baker} \cite{Bk}: $K(t,s)=e^{st}$, $\ 0 \leq s,t \leq 1$, $u(s)=e^{s}$, $\ f(t)=\frac{e^{t+1}-1}{t+1}.$
\end{itemize}
The second set of test problems are well-known problems 
from  \cite{BRS} :  \emph{gauss, hilbert, lotkin, moler, pascal, prolate}. As in  \cite{BRS}, we combined these six $n \times n$ matrices with 6 solution vectors $x_i = 1, x_i = i/n $, 
$x_i = ((i - [n/2])/[n/2])^2$, $ x_i = \sin{(2 \pi (i -1)/n)}$, $x_i = i/n +1/4 \sin{(2 \pi (i -1)/n)}$, $x_i = 0$ if $i \leq [n/2]$ and $x_i = 1$ if $i > [n/2]$. For getting the third set of test problems we combined the matrices of the first set of test problems with 6 solutions of the second set of test problems. 

Numerical experiments showed that performance of different rules depends essentially on eigenvalues of the matrix $A^T A$. We characterize these eigenvalues via three indicators: the value of minimal eigenvalue 
$\lambda_{\mathrm{min}}$, by the value $N_1$, showing number of eigenvalues less than $\alpha_N$ and by the value $\Lambda$, characterizing the density of location of eigenvalues on the interval
$[\max{(\alpha_N,\lambda_{\mathrm{min}})},1]$. More precisely, let the eigenvalues of the matrix  $A^TA$ be   $\lambda_1 \geq \lambda_2 \geq ... \geq \lambda_n=\lambda_{\mathrm{min}}$.
Then value of $\Lambda$ is found by the formula
$\Lambda=\max_{ \lambda_k>\max{(\alpha_N,\lambda_n)}} \lambda_k/\lambda_{k+1} $.
We characterize the smoothness of the solution  by value 
\[p1=\frac{\log \min_{\alpha} e_2(\alpha, \n{f-f_*})-\log\n{u_*}}{\log \n{f-f_*} - \log \n{f_*}},\] 
where $\n{f-f_*}=10^{-6}$.
Table 1 contains the results of characteristics of the matrix  $A^T A$  in case $n=100, \ \alpha_N=10^{-18}.$ 
\begin{table}
\caption{Characteristics of matrix $A^T A$ and the solution $u_*$}
\label{tab:1}       
\begin{tabular}{lcccc|lcccc}
\br
Problem  & $\lambda_{\mathrm{min}}$ & $N_1$ & $\Lambda$ & p1 &Problem  & $\lambda_{\mathrm{min}}$ & $N_1$ & $\Lambda$ & p1   \\
\mr
Baart	&5.2E-35&	92&	1665.7& 0.197 &Spikes	&1.3E-33&	89&	1529.3&  0.005 \\
Deriv2	&6.7E-09&	0&		16.0& 0.286	&Wing	&2.9E-37&	94&	9219.1& 0.057  \\
Foxgood	&9.0E-33&	85&	210.1&0.426	&Baker	&1.0E-33&	94&	9153.1&  0.498\\
Gravity	&1.6E-33&	68&	4.1&  0.403 &Ursell	&6.9E-34&	94&	3090.2&  0.143 \\
Heat	&5.5E-33&	3&	2.4E+20&	0.341&Indramm	&2.7E-33&	94&	9154.6&  0.395 \\
Ilaplace	&3.8E-33&	79	&16.1&	 0.211&Waswaz2 	&2.0E-34&98&	1.7E+30&  0.654\\
Phillips	&1.4E-13&	0&	9.4&0.471 &Groetsch1	&5.8E-33&	78&	11.2&  0.176 \\
Shaw & 2.3E-34&	85&	289.7&0.244	&Groetsch2	&1.0E-04&	0&  	4.0&  0.652 \\
\br
\end{tabular}
\end{table}

In all tests discretization parameters   
 $n \in \{60, 80, 100, 120, 140, 160, 180\}$ were used. We present the results of numerical experiments in tables for $n=100$.
Since the performance of rules generally depends on the smoothness $p$ of the exact solution in (\ref{mitteek_1}), we complemented the standard solutions $u_*$ of (now discrete) test problems with smoothened solutions 
$|A|^p u_*, p=2$ computing the right-hand side as $A(|A|^p u_*)$.
Results for $p=2$ are given in Table 7, in all other tables and figures $p=0$.
 After discretization all problems were scaled (normalized) in such a way that the norms of the operator and the right-hand side were 1. 
All norms here and in the text below are Euclidean norms. 
On the base of exact data $f_*$ we formed the noisy data $f$, where $\n{f-f_*}$ has values $10^{-1}, 10^{-2},...,10^{-6}$, noise $f-f_*$ has normal distribution and the components of the noise were uncorrelated. We generated 20 noise vectors and used these vectors in all problems. We search the regularization parameter from the set 
$\Omega$, where $\alpha_0=1, q=0.95 $ and $N$ is chosen so that  $\alpha_N \geq 10^{-18} > \alpha_{N+1}$.  
To guarantee that calculation errors do not influence essentially the numerical results, calculations were performed on geometrical sequence of decreasing $\alpha$-s and finished for largest $\alpha$ with $d_{\mathrm{MD}}(q\alpha)>d_{\mathrm{MD}}(\alpha)$, while theoretically the function $d_{\mathrm{MD}}(\alpha)$ is monotonically increasing. 
Actually this precautionary measure was needed only in problem \emph{groetsch2}, calculations on $\alpha>\alpha_N$ were finished only in this problem.
Since in model equations the exact solution is known, it is possible to find the regularization parameter $\alpha_*$, which gives the smallest error on the set $\Omega$ . 
For every rule R the error ratio 
\[E=\frac{\n{u_{\alpha_{\mathrm{R}}}-u_*}}{\n{u_{\alpha_*}-u_*}}= \frac{\n{u_{\alpha_{\mathrm{R}}}-u_*}}{\min_{\alpha \in \Omega} \n{u_{\alpha}-u_*}}\]
describes the performance of the rule R on this particular problem. To compare the rules or to present their properties, the following tables show averages and maximums of these error ratios over various parameters of the data set (problems, noise levels $\delta$).  We say that the heuristic rule fails if the error ratio $E>100$.
In addition to the error ratio $E$ we present in some cases also error ratios 
 \[E1= \frac{\n{u_{\alpha_{\mathrm{R}}}-u_*}}{\min_{\alpha \in \Omega} e_1(\alpha)}, \quad E2= \frac{\n{u_{\alpha_{\mathrm{R}}}-u_*}}{\min_{\alpha \in \Omega} e_2(\alpha)}.\]

\begin{table}
\caption{Results for the set $L_{\mathrm{min}}$}
\label{tab:2}       
\begin{tabular}{lccc|cc|cc|cc}
\br
Problem & ME & MEe & DP  & \multicolumn{2}{c|}{Best of $L_{\mathrm{min}}$}  &\multicolumn{2}{c|}{ $|L_{\mathrm{min}}|$} &\multicolumn{2}{c}{ Apost.  $C$ }\\
 & Aver E & Aver E & Aver E  & Aver E & Max E  & Aver & Max  & Aver & Max\\
\mr
Baart        & 1.43  & 1.32 & 1.37 & 1.23 & 2.51 & 6.91 & 8  & 3.19 & 3.72 \\
Deriv2        & 1.29 & 1.07 & 1.21 & 1.08 & 1.34 & 1.71 & 2  & 3.54 & 4.49 \\
Foxgood     & 1.98 & 1.42 & 1.34 & 1.47 & 6.19 & 3.63 & 6  & 3.72 & 4.16 \\
Gravity       & 1.40 & 1.13 & 1.16 & 1.13 & 1.83 & 1.64 & 3  & 3.71 & 4.15 \\
Heat          & 1.19 & 1.03 & 1.05 & 1.12 & 2.36 & 3.19 & 5  & 3.92 & 4.50 \\
Ilaplace      & 1.33 & 1.21 & 1.26 & 1.20 & 2.56 & 2.64 & 5  & 4.84 & 6.60 \\
Phillips       & 1.27 & 1.02 & 1.02 & 1.06 & 1.72 & 2.14 & 3  & 3.99 & 4.66 \\
Shaw         & 1.37 & 1.24 & 1.28 & 1.19 & 2.15 & 4.68 & 7  & 3.48 & 4.43 \\
Spikes        & 1.01 & 1.00 & 1.01 & 1.00 & 1.02 & 8.83 & 10 & 3.27 & 3.70 \\
Wing          & 1.16 & 1.13 & 1.15 & 1.09 & 1.38 & 5.20 & 6  & 3.07 & 3.72 \\
Baker         & 3.91 & 2.38 & 2.09 & 2.31 & 16.17 & 5.38 & 6  & 3.14 & 3.72 \\
Ursell         & 2.14 & 1.97 & 2.03 & 1.69 & 4.44 & 5.53 & 6   & 3.07 & 3.43 \\
Indramm    & 5.20 & 3.26 & 3.37 & 3.38 & 25.67 & 5.64 & 6  & 3.08 & 3.71 \\
Waswaz2    & 127.2 & 49.9 & 1.20 & 2.44 & 9.03 & 1.00 & 1   & 2.00 & 2.00 \\
Groetsch1 & 1.12 & 1.07 & 1.08 & 1.06 & 1.51 & 3.99 & 7   & 4.23 & 5.20 \\
Groetsch2 & 1.02 & 1.22 & 1.67 & 1.13 & 1.69 & 1.67 & 2   & 5.62 & 13.72 \\
\mr
Set 1         & 9.62 & 4.46 & 1.46 & 1.48 & 25.67 & 3.99 & 10 & 3.67 & 13.72 \\
Set 2         & 1.57 & 1.32 & 1.36 & 1.20 & 5.33 & 4.40 & 10 & 3.50 & 5.43 \\
Set 3         & 7.19 & 3.45 & 1.47 & 1.48 & 61.02 & 3.64 & 10 & 3.73 & 9.12 \\
\br
\end{tabular}
\end{table}

The results of numerical experiments for local minimizers $\alpha \in L_{\mathrm{min}}$ of the function
 $\psi_{\mathrm{Q}}(\alpha)$ are given in the Table 2. For comparison the results of $\delta$-rules with $\delta=\n{f-f_*}$ are presented in the columns 2-4. 
Columns 5 and 6 contain respectively the averages and maximums of error ratios $E$ for the best local minimizer $\alpha \in L_{\mathrm{min}}$. The results show that for many problems the Tikhonov approximation with the best local minimizer $\alpha \in L_{\mathrm{min}}$ is even more accurate than with the $\delta$-rules parameters $\alpha_{\mathrm{ME}}, \alpha_{\mathrm{MEe}}$ or $\alpha_{\mathrm{DP}}$. 
Tables 1, 2 show also that for rules ME and MEe the average error ratio $E$ may be relatively large for problems where $\Lambda$ is large and most of eigenvalues are smaller than $\alpha_N$, while in this case $\min_{\alpha \in \Omega} e(\alpha)$ may be essentially smaller than $\min_{\alpha \in \Omega} e_2(\alpha, \n{f_*-f})$. In these problems the discrepancy principle gives better parameter than ME and MEe rules.
Columns 7 and 8 contain the averages and maximums of cardinalities $|L_{\mathrm{min}}|$ of sets $L_{\mathrm{min}}$ (number of elements of these sets). Note that number of local minimizers depends on parameter
$q$ (for smaller $q$ the number of local minimizers is smaller) and on length of minimization interval determined by the parameters $\alpha_N$, $\alpha_0$. The number of local minimizers is smaller also for larger noise level. 
Columns 9 and 10 contain the averages and maximums of values of constant
$C$ in the a posteriori error estimate (\ref{eq6}). The value of $C$ and error estimate (\ref{eq6}) allow to assert, that in our test problems the choice of $\alpha$ as the best local minimizer in $L_{\mathrm{min}}$ guarantees that error of the Tikhonov approximation has the same order as $\min_{\alpha_N \leq \alpha \leq \alpha_0} e_1(\alpha)$. Note that over all test problems the maximum of error ratio $E1$ for the best local minimizer
in $L_{\mathrm{min}}$ and for the discrepancy principle were 1.93 and 9.90 respectively. This confirm the result of Theorem 3 that at least one minimizer of the function $\psi_{\mathrm{Q}}(\alpha)$ is a good regularization parameter.

\section {Q-curve and triangle area rule for choosing heuristic regularization parameter}



We showed in previous section that at least one local minimizer of the function $\psi_{\mathrm{Q}}(\alpha)$ is pseudooptimal parameter and we may omit small local minimizers $\alpha$, for which $d_{\mathrm{MD}}(\alpha)$ is only slightly larger than  $d_{\mathrm{MD}}(\alpha_N)$. We propose to construct for parameter choice the Q-curve
The Q-curve figure uses log-log scale with functions $d_{\mathrm{MD}}(\alpha)$ and $\psi_{\mathrm{Q}}(\alpha)$ on the $x$-axis and $y$-axis respectively. 
The Q-curve can be considered as the analogue of L-curve, where functions $Au_\alpha-f$ and $u_\alpha=-\alpha^{-1}A^*(Au_\alpha-f)$ are replaced by functions $B_\alpha(Au_\alpha-f)$ and 
$-\alpha^{-1}A^*B^2_\alpha (Au_\alpha-f)$ (see (\ref{MD})) respectively.
We denote $\tilde d_{\mathrm{MD}}(\alpha):=\log_{10} d_{\mathrm{MD}}(\alpha),$ $\tilde \psi_{\mathrm{Q}}(\alpha):=\log_{10} \psi_{\mathrm{Q}}(\alpha) $. For many problems the curve $(\tilde d_{\mathrm{MD}}(\alpha),\tilde \psi_{\mathrm{Q}}(\alpha) )$ (or a part of this) has the form of letter L or V and we choose the minimizer at the "corner" point of L or V. We use the common logarithm instead of natural logarithm, while then the Q-curve allows easier to estimate the supposed value of the noise level. 
On the figures 1-8 $n=100$  is used, on the figures 9, 10 $n=60$.
On the figures 1-4 the L-curves and Q-curves are compared for two problems, the global minimizer $\alpha_{opt}$ of the function $e_1(\alpha)$ is also presented.  Note that in problem  \emph{baart} $\lambda_{\mathrm{min}} < \alpha_N$ and in problem  \emph{deriv2} $\lambda_{\mathrm{min}} > \alpha_N$. 

\begin{figure}[!tbp]
  \begin{minipage}[b]{0.45\textwidth}
    \includegraphics[width=\textwidth]{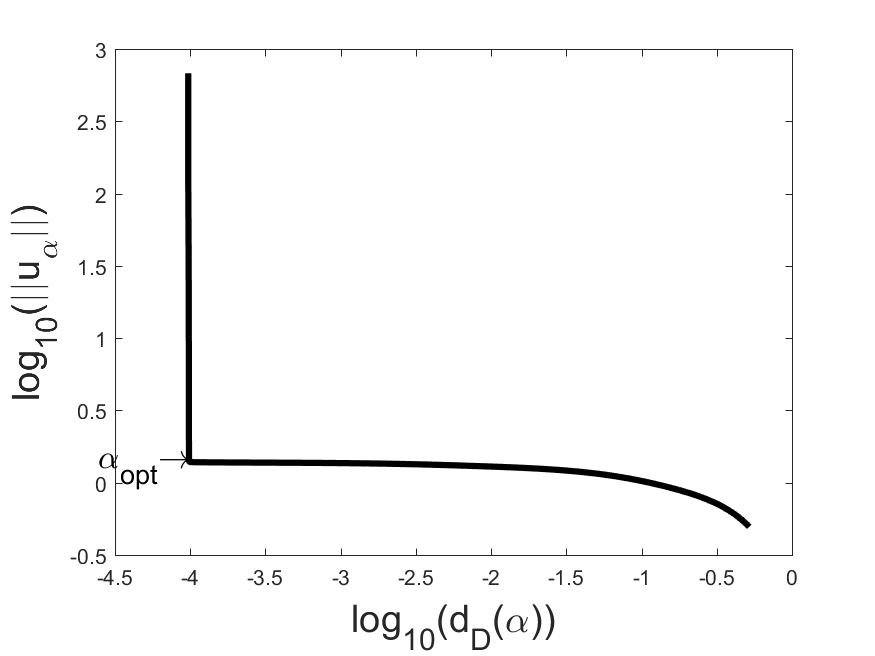}
    \caption{L-curve for \emph{baart}.}
  \end{minipage}
  \hfill
  \begin{minipage}[b]{0.45\textwidth}
    \includegraphics[width=\textwidth]{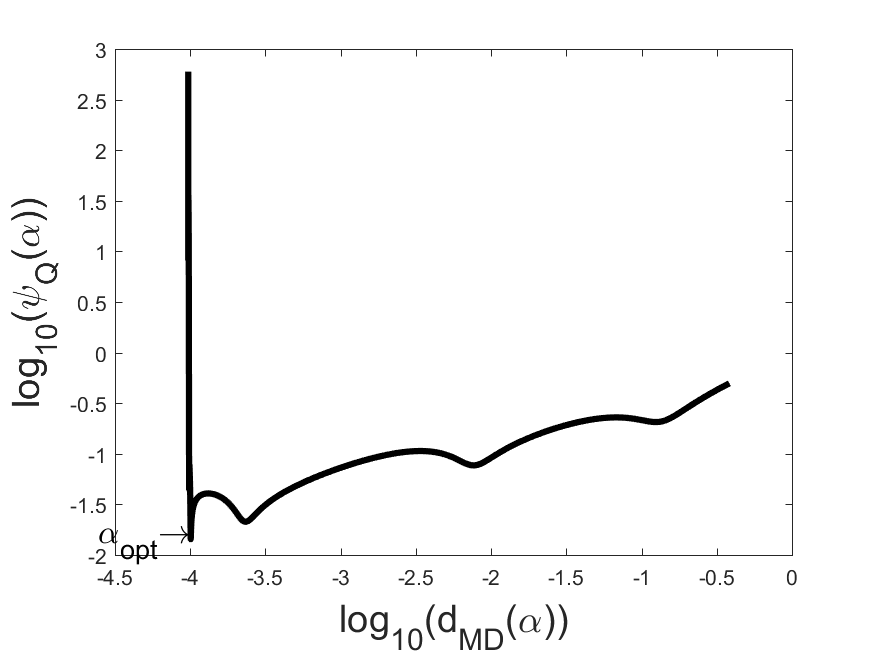}
    \caption{Q-curve for \emph{baart}.}
  \end{minipage}
\end{figure}

\begin{figure}[!tbp]
  \centering
  \begin{minipage}[b]{0.45\textwidth}
    \includegraphics[width=\textwidth]{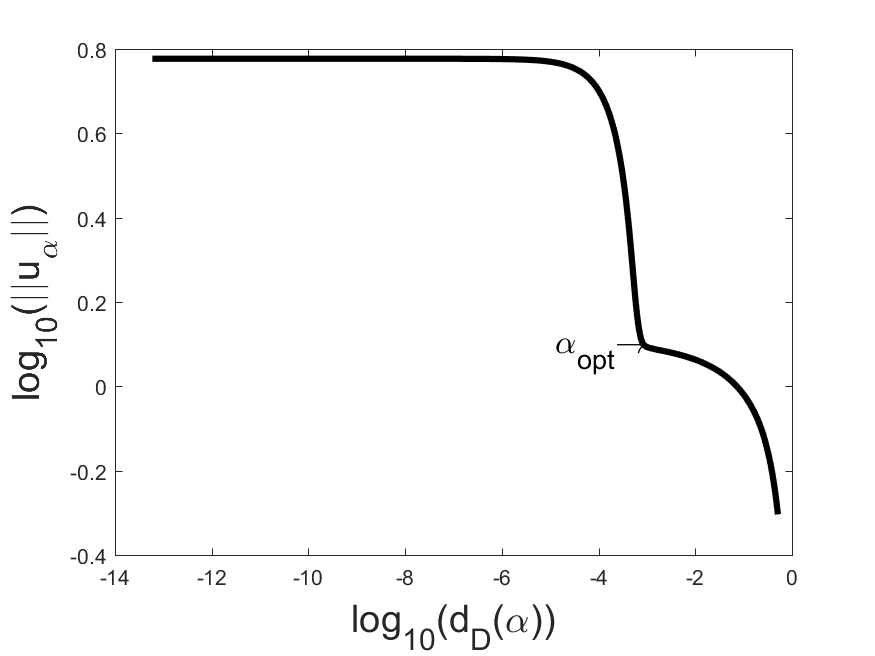}
    \caption{L-curve for \emph{deriv2}.}
  \end{minipage}
  \hfill
  \begin{minipage}[b]{0.45\textwidth}
    \includegraphics[width=\textwidth]{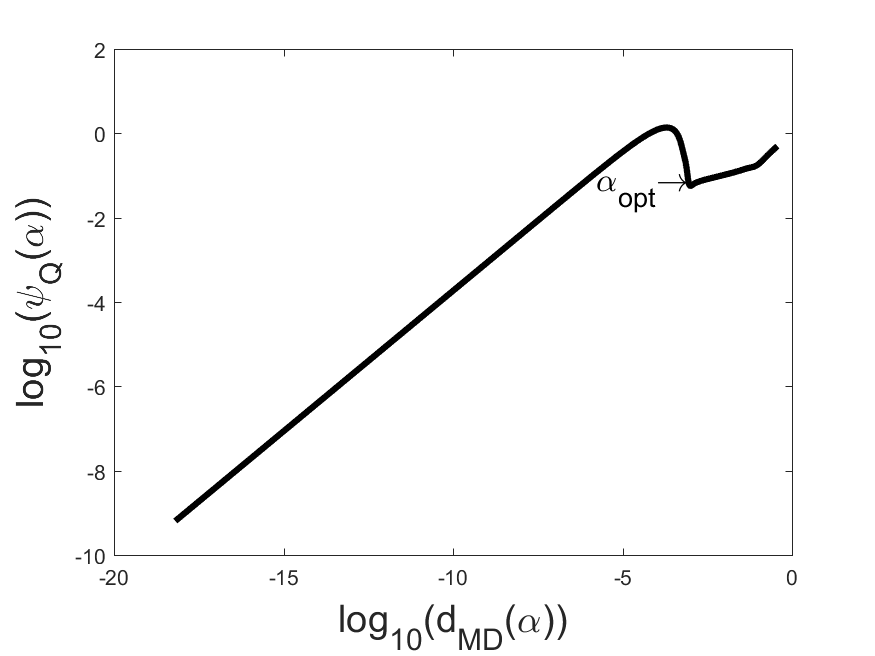}
    \caption{Q-curve for \emph{deriv2}.}
  \end{minipage}
\end{figure}

In most cases one can see on the Q-curve only one clear "corner" with one local minimizer. If the "corner" contains several local minimizers, we recommend to choose such local minimizer, for which the sum of coordinates of corresponding point on the Q-curve is minimal. If Q-curve has several "corners" we recommend to use the very right of them. 
Actually, it is useful to present in parameter choice besides figures  for every local minimizer $m_k$ of the function   $\psi_{\mathrm{Q}}(\alpha)$ also coordinates of point $(\tilde d_{\mathrm{MD}}(m_k),\tilde \psi_{\mathrm{Q}}(m_k) )$ and sums of coordinates. 

For finding proper local minimizer of the function $\psi_{\mathrm{Q}}(\alpha)$ we present now a rule which works well for all test problems from set 1.  The idea of rule is to search proper local minimizer $m_k$
constructing certain triangles on the Q-curve and finding which of them has the maximal area. For parameter $\alpha$ corresponds a point $P(\alpha)$ on the Q-curve with corresponding coordinates   $\s{\tilde d_{\mathrm{MD}}(\alpha),\tilde \psi_{\mathrm{Q}}(\alpha)}$.
For every local minimizer $m_k$ of the function $\psi_{\mathrm{Q}}(\alpha)$ corresponds a triangle $T(k,r(k),l(k))$ with vertices $P(m_k), P(M_{r(k)})$ and $P(M_{l(k)})$ on the Q-curve, where indices $r(k)$ and $l(k)$ correspond to the largest local maximums of the function $\psi_{\mathrm{Q}}(\alpha)$ on two sides of the local minimum $m_k$:
   \[\psi_{\mathrm{Q}}(M_{r(k)})=\max_{j < k} \psi_{\mathrm{Q}}(M_j), \qquad \psi_{\mathrm{Q}}(M_{l(k)})=\max_{j \geq k} \psi_{\mathrm{Q}}(M_j). \]

 \textbf{Triangle area rule (TA-rule).} We choose for the regularization parameter such local minimizer $m_k$ of the function $\psi_{\mathrm{Q}}(\alpha)$ for which the area of the triangle $T(k,r(k),l(k))$ is the largest.
 

\begin{figure}[!tbp]
  \centering
  \begin{minipage}[b]{0.45\textwidth}
    \includegraphics[width=\textwidth]{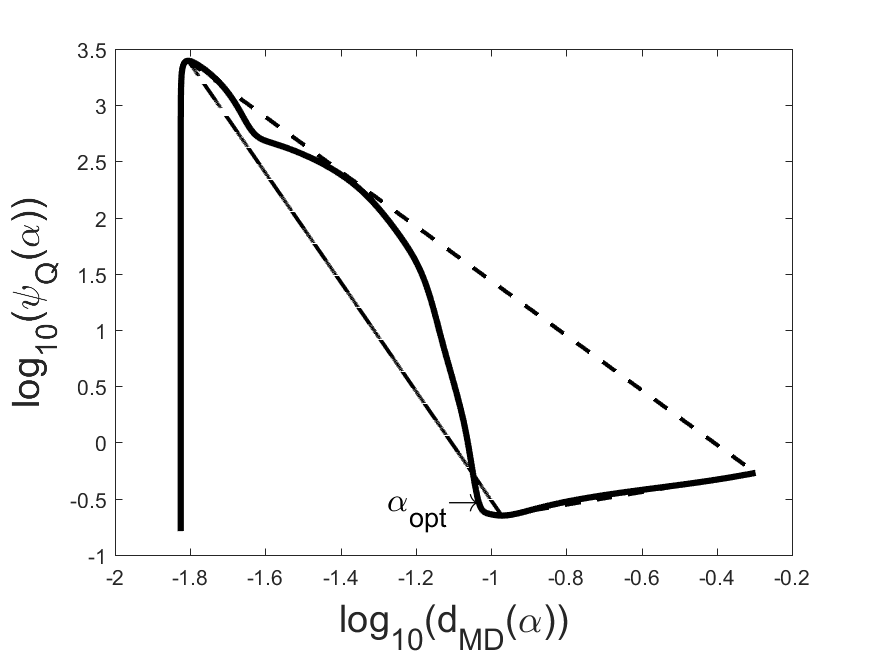}
    \caption{Q-curve in \emph{heat}.}
  \end{minipage}
  \hfill
  \begin{minipage}[b]{0.45\textwidth}
    \includegraphics[width=\textwidth]{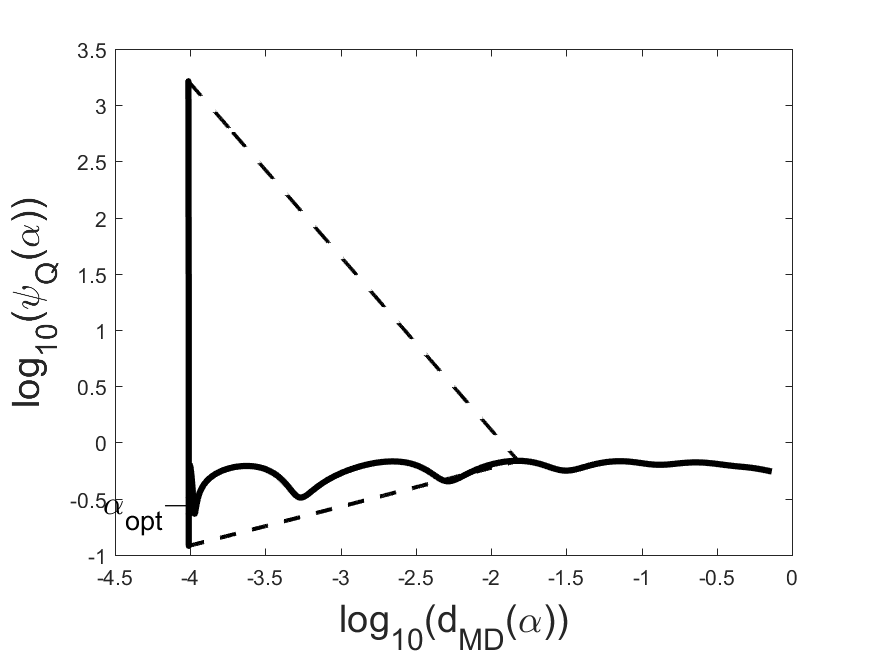}
    \caption{Q-curve in \emph{spikes}.}
  \end{minipage}
\end{figure}

\begin{figure}[!tbp]
  \centering
  \begin{minipage}[b]{0.45\textwidth}
    \includegraphics[width=\textwidth]{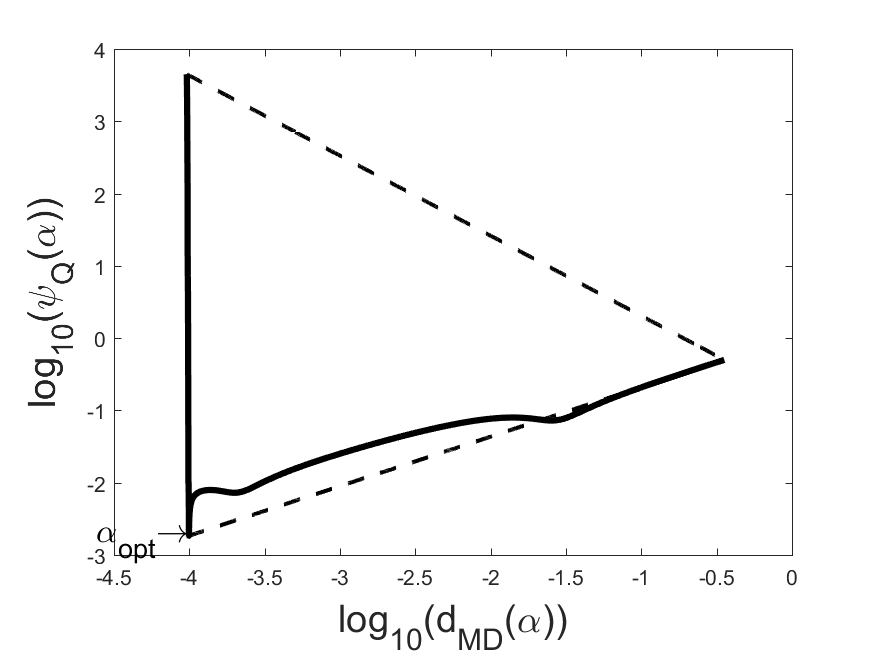}
    \caption{Q-curve in \emph{foxgood}.}
  \end{minipage}
  \hfill
  \begin{minipage}[b]{0.45\textwidth}
    \includegraphics[width=\textwidth]{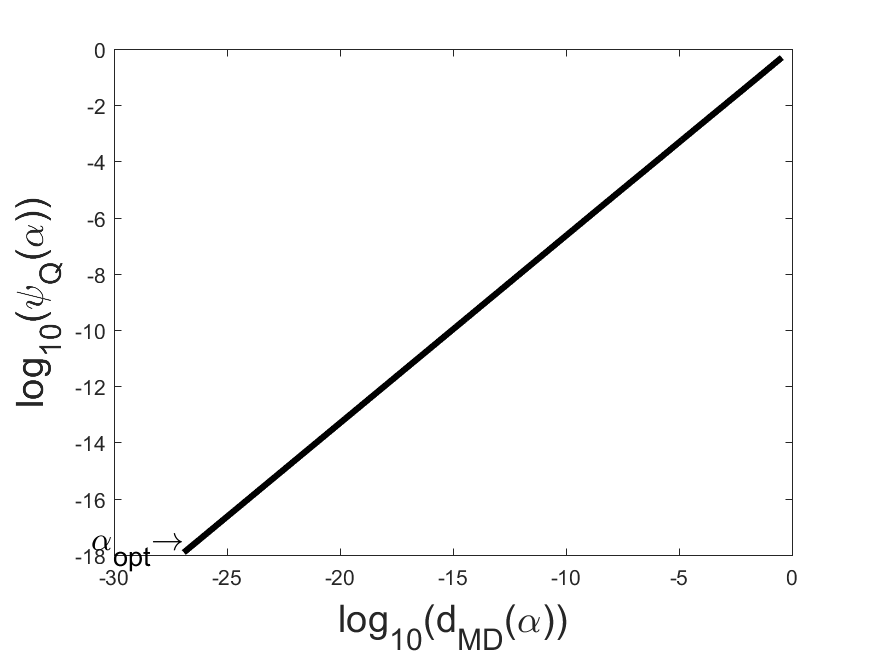}
    \caption{Q-curve in \emph{groetsch2}.}
  \end{minipage}
\end{figure}
We present on the Figures 5-7 examples of Q-curves and triangle $T(k,r(k),l(k))$ with largest area, the TA-rule chooses for the regularization parameter corresponding minimizer. In some problems the function  $\psi_{\mathrm{Q}}(\alpha)$ may be monotonically increasing as for problem \emph{groetsch2} (Figure 8), then the function $\psi_{\mathrm{Q}}(\alpha)$ has only one local minimizer $\alpha_N$. Then vertices $P(M_{l(k)})$ and $P(m_k)$ coincide and area of corresponding triangle is zero. Then this is the only triangle, the TA-rule chooses for the regularization parameter $\alpha_N$. 
The results of the numerical experiments for test set 1 ( $n=100$ ) for the TA-rule and some other rules (see Section 2.2) are given in Tables 3 and 4. These results show that the TA-rule works well in all these test problems, the accuracy is comparable with $\delta$-rules (see Table 2), but previous heuristic rules fail in some problems. Note that average of the error ratio increases for decreasing noise level. For example, for $\n{f-f_*} \in \{10^{-1},10^{-2},10^{-5},10^{-6}\}$ corresponding error ratios $E$ were 1.47, 1.49, 1.78 and 2.08 respectively.

 \begin{table}
\caption{Averages of error ratios E and failure \% (in parenthesis) for heuristic rules}
\label{tab:3}       
\begin{tabular}{lcc|ccccc}
\br
Problem	 & \multicolumn{2}{c|}{TA rule} &Quasiopt.	&WQ	&HR &Reginska	&MCurv\\
  &Mean E & Max E	&Mean E	&Mean E	&Mean E&Mean E	&Mean E\\
\mr
Baart		&1.51  &14.57	&1.54	&1.43	&2.58	&1.32	&4.75\\
Deriv2		&1.18&	1.27&2.01	&2.26	&2.28	&3.67	&(9.2\%)\\
Foxgood		&1.56&	3.39&1.57	&1.57	&8.36	&(10.8\%)	&5.95\\
Gravity		&1.14&	2.27&1.13	&1.13	&2.66	&(0.8\%)	&2.04\\
Heat		&1.26&1.34	&(65.8\%)	&(66.8\%)	&1.64	&(4.2\%)	&4.11\\
Ilaplace	&1.24&2.34	&1.24	&1.22	&1.94	&1.66	&2.99\\
Phillips	&1.07&1.20	&1.09	&(3.3\%)	&2.27	&(44.2\%)	&1.34\\
Shaw		&1.42&	8.96&1.43	&1.41	&2.34	&1.80	&4.64\\
Spikes		&1.01&	5.75&1.01	&1.01	&1.03	&1.01	&1.05\\
Wing		&1.39&6.63	&1.40	&1.30	&1.51	&1.18	&1.57\\
Baker		&3.30&	11.33&3.30	&3.30	&(0.8\%)	&(21.7\%)	&7.78\\
Ursell		&2.87&	31.06&3.54	&2.35	&4.71	&1.86	&7.54\\
Indramm		&3.74&9.07	&4.43	&4.16	&(2.5\%)	&(9.2\%)	&(15.8\%)\\
Waswaz2		&2.43&9.01	&2.43	&2.43	&(65.8\%)	&2.33	&(3.3\%)\\
Groetsch1	&1.14&	4.56&1.14	&1.12	&1.61	&1.26	&1.52\\
Groetsch2	&1.13&1.74	&1.27	&2.73	&1.66	&5.49	&1.81\\
\mr
Total	&1.71	& 31.06&$>100$	&$>100$	&50.5	&43.8	&8.17\\
Failure \%     &0\%	& &4.11\%  &4.38\%  &4.32\%  &5.68\%  &1.77\% \\
Max E2          &2.61  & &$>100$       &$>100$       &2.63   &$>100$       &24.5\\ 
\br
\end{tabular}
\end{table}

Let us comment other heuristic rules.
The accuracy of the quasi-optimality criterion is for many problems the same as for the TA-rule, but this rule fails in problem \emph{heat}. Characteristic feature of the problem \emph{heat} is that location of the eigenvalues in the interval $[\alpha_N,1]$ is sparse and only some eigenvalues are smaller than $\alpha_N$ (see Table 1).
The weighted quasioptimality criterion behaves in a similar way as the quasioptimality criterion, but is more accurate  in problems where $\lambda_{\mathrm{min}} \leq \alpha_N$; if $\lambda_{\mathrm{min}} > \alpha_N$, the quasioptimality criterion is more accurate. 
The rule of Hanke-Raus may fail in test problems with large $\Lambda$  and for other problems the error of the approximate solution is in most problems approximately two times larger than for parameter chosen by the quasi-optimality principle. The problem in this rule is that it chooses too large parameter compared with the optimal parameter. However, HR-rule is stable in the sense that the largest error ratio E2 is relatively small in all considered test problems.
Reginska's rule may fail in many problems but it has the advantage that it works better than other previous rules if the noise level is large. The Reginska's rule 
did not fail in case
$\n{f-f_*} \geq 10^{-3}$ and has average of error ratios of all problems $E=2.24$ and   $E=2.80$ in cases $\n{f-f_*}=10^{-1}$ and $\n{f-f_*}=10^{-2}$ respectively. 
Advantage of the maximum curvature rule is the small percentage of failures compared with other previous rules. 

Distribution of error ratios E in Table 4 shows also that in test problems set 1 from considered rules the TA-rule is the most accurate rule.

\begin{table}
\caption{Distribution of error ratios E in different rules}
\label{tab:4}      
\begin{tabular}{lccccccccc}
\br
Decile	 &TA rule	&Quasiopt.	&WQ	&HR &Reginska	&MCurv & ME & MEe & DP\\
\mr
10	&1.00	&1.00	&1.00	&1.08	&1.00	&1.06	&1.01	&1.00	&1.00\\
20	&1.01	&1.01	&1.01	&1.36	&1.04	&1.27	&1.03	&1.00	&1.01\\
30	&1.02	&1.03	&1.03	&1.56	&1.12	&1.48	&1.09	&1.01	&1.02\\
40	&1.04	&1.06	&1.06	&1.82	&1.27	&1.83	&1.16	&1.03	&1.04\\
50	&1.09	&1.13	&1.12	&2.12	&1.66	&2.31	&1.22	&1.08	&1.08\\
60	&1.18	&1.29	&1.29	&2.43	&2.42	&3.05	&1.33	&1.16	&1.16\\
70	&1.35	&1.57	&1.59	&3.19	&4.19	&4.51	&1.52	&1.29	&1.30\\
80	&1.71	&2.17	&2.29	&5.94	&9.93	&7.03	&2.02	&1.50	&1.52\\
90	&2.27	&6.45	&6.18	&19.35	&43.91	&12.95	&4.45	&2.88	&2.11\\
\br
\end{tabular}
\end{table}

Note that figure of the Q-curve enables to estimate the reliability of chosen parameter. If the Q-curve has only one "corner", then chosen parameter is quasioptimal with small constant $C$, if $\lambda_{\mathrm{min}} < \alpha_N $, but in case $\lambda_{\mathrm{min}} \geq \alpha_N $ it is quasioptimal under assumption that the problem needs regularization.  

\section {Further developments of the area rule}

The TA-rule may fail for problems which do not need regularization, if the function $\psi_{\mathrm{Q}}(\alpha)$ is not monotonically increasing. In this case the TA-rule chooses parameter $\alpha \geq \lambda_{\mathrm{min}}$, but parameter $\alpha < \lambda_{\mathrm{min}}$ would be better. For example, the TA-rule fails for matrix \emph{Moler} in some cases.
Let us consider now the question, in which cases regularization parameter $\alpha_N$ is good. If the function $\psi_{\mathrm{Q}}(\alpha)$ is monotonically increasing then the function $\psi_{\mathrm{Q}}(\alpha)$ has only one local minimizer $m_1=M_1=\alpha_N$ and then for parameter $\alpha_N$ we have the error estimate
 \[ \n{u_{\alpha_N}-u_*} \leq q^{-1} (1+T(\alpha_N)) \min_{\alpha_N \leq \alpha \leq \alpha_0} e_1(\alpha),\] 
where value of $T(\alpha_N)= \max_{\alpha_j  \in \Omega, \alpha_N \leq \alpha_j \leq \alpha_0} T\s{\alpha_N, \alpha_j} \leq c_q \ln(\frac{\alpha_0}{\alpha_N})$ (see Theorem 3) can be computed a posteriori and this value is the smaller the faster the function $\psi_{\mathrm{Q}}(\alpha)$  increases.
We can take $\alpha_N$ for the regularization parameter also in the case if the condition 

 \begin{equation} \label{Gdef}
\frac{\psi_{\mathrm{Q}}(\alpha')}{\psi_{\mathrm{Q}}(\alpha)} \leq c_0 \qquad \forall \alpha, \alpha' \in \Omega, \quad \alpha_N \leq \alpha' < \alpha \leq \alpha_0 
\end{equation}
holds while one can show similarly to the proof of Theorem 3 that $T(\alpha_N) \leq c_0 c_q \ln(\frac{\alpha_0}{\alpha_n})$ and the error of the regularized solution is small.
For problems which do not need regularization we can improve the performance of the TA-rule searching proper local minimizer smaller or equal than 
$\alpha_{\mathrm{HQ}}:=\max\{\alpha_{\mathrm{HR}}, \alpha_{\mathrm{Q}}\},$ where 
$\alpha_{\mathrm{HR}}$, $\alpha_{\mathrm{Q}}$ are global minimizers of functions $\psi_{\mathrm{HR}}(\alpha)$ and $\psi_{\mathrm{Q}}(\alpha)$ respectively on the interval 
$[\max{\s{\alpha_N,\lambda_{\mathrm{min}}}},\alpha_0]$.

These ideas enable to formulate the following upgraded version of the TA-rule.

\textbf{Triangle area rule 2 (TA-2-rule).} We fix a constant
 $c_0, 1 \leq c_0 \leq 2$.  If  
condition (\ref{Gdef}) holds,
 we choose parameter $\alpha_N$. 
Otherwise choose for the regularization parameter such local minimizer $m_k \leq \alpha_{\mathrm{HQ}}$ of the function $\psi_{\mathrm{Q}}(\alpha)$ for which the area of triangle $T(k,r(k),l(k))$ is largest.

Results of numerical experiments for the rule TA-2 with the discretization parameter $n=100$ and problem sets 1-3 are given in Tables 5 and 6 (columns 2 and 3). The results show that rule TA-2 works well in all considered testsets 1-3. However, rule TA-2 may fail in some other problems which do not need regularization. Such example is problem with matrix \emph{moler} and solution $ x_i = \sin{(12 \pi (i -1)/n)}$, where the rule TA-2 fails if the noise level is below $10^{-4}$; but in this case all other considered heuristic rules fail too. 

Rules TA and TA-2 fail in problem \emph{heat} in some cases for discretization parameter $n=60$. Figures 9, 10 show the form of Q-curve in problem \emph{heat} with $n=60$. Function $\psi_{\mathrm{Q}}(\alpha)$ has two local minimizers with corresponding points $P(m_1)$ and $P(m_2)$ on the Q-curve and 3 local maximum points $P(M_k), k=0,1,2$. On Figure 9 Rule TA-2 chooses local minimizer corresponding to the point $P(m_2)$, but then the error ratios are large: $E=20.3, E2=18.34$.  
\begin{figure}[!tbp]
  \centering
  \begin{minipage}[b]{0.45\textwidth}
    \includegraphics[width=\textwidth]
{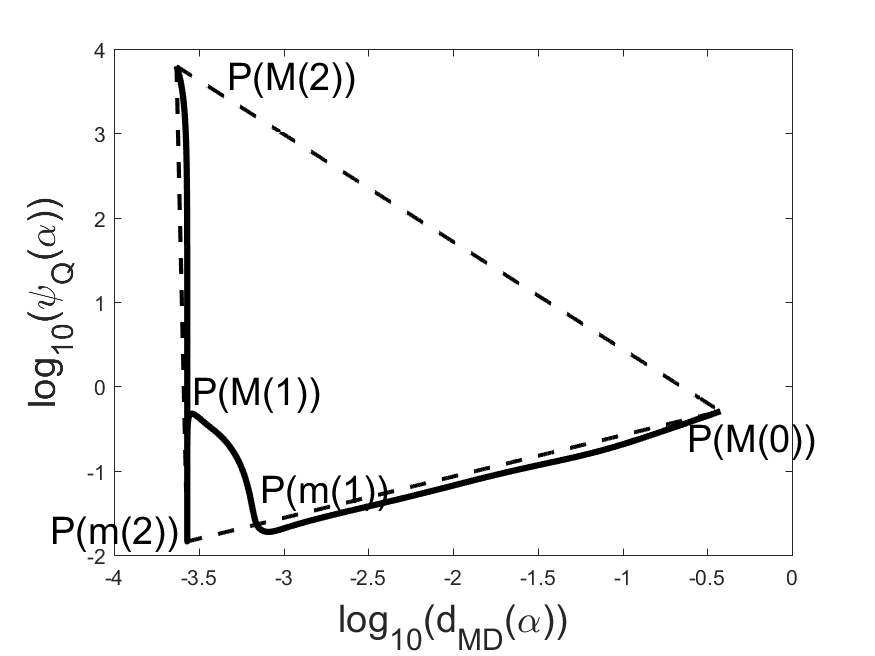}
    \caption{Q-curve, problem  \emph{heat}, $n=60$.}
  \end{minipage}
  \hfill
  \begin{minipage}[b]{0.45\textwidth}
    \includegraphics[width=\textwidth]{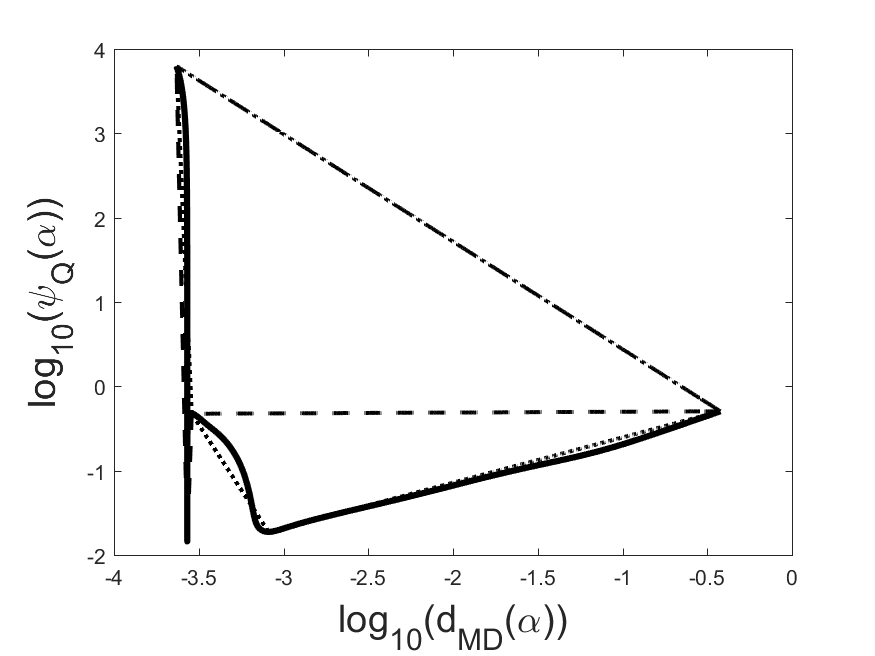}
    \caption{Q-curve, problem \emph{heat}, $n=60$.}
  \end{minipage}
\end{figure}

In the following we consider methods which work well also in this problem.
Let $g[\alpha_1,\alpha_2](\alpha), \, \alpha \in [\alpha_1, \alpha_2]$ be parametric representation of straight line segment connecting points $P(\alpha_1)$ ja  $P(\alpha_2)$, thus 
\[ \fl \qquad g[\alpha_1,\alpha_2](\alpha)=\tilde \psi_{\mathrm{Q}}(\alpha_1)+\beta (\tilde d_{\mathrm{MD}}(\alpha)-\tilde d_{\mathrm{MD}}(\alpha_1)), \quad
\beta=\frac{\tilde \psi_{\mathrm{Q}}(\alpha_2)-\tilde \psi_{\mathrm{Q}}(\alpha_1)}{\tilde d_{\mathrm{MD}}(\alpha_2)-\tilde d_{\mathrm{MD}}(\alpha_1)}.\]
Let $g[\alpha_1,\alpha_2,...,\alpha_k](\alpha)$, $k>2$  be parametric representation of broken line connecting points $P(\alpha_1)$, $P(\alpha_2),\ldots,P(\alpha_k)$, thus  
\[\fl \qquad g[\alpha_1,\alpha_2,...,\alpha_k](\alpha)=g[\alpha_j,\alpha_{j+1}](\alpha), \qquad  \alpha_j \leq \alpha \leq \alpha_{j+1}, \quad 1\leq j \leq k-1.\]
In triangle rule certain points $P(M_{l(k)}),P(m_k),P(M_{r(k)})$ are connected by the broken line $t_1(\alpha)=g[M_{l(k)},m_k,M_{r(k)}](\alpha)$ which approximates the error function $\tilde e_1(\alpha)=\log_{10}(e_1(\alpha))$ well, if $m_k$ is the "right" local minimizer. By construction of function $t_1(\alpha)$ we use only 3 points on the Q-curve. We will get a more stable rule if the form of the Q-curve has more influence  to the construction of approximates to the error function $\tilde e_1(\alpha)$.
Let $\{i(1),i(2),...,i(n1)\}$ and $\{j(1),j(2),...,j(n2)\}$ be the largest sets of indices, satisfying the inequalities
\[ \fl \qquad k \leq i(1)<i(2)<...<i(n1) \leq K, \quad \psi_{\mathrm{Q}}(M_{i(1)}) \leq \psi_{\mathrm{Q}}(M_{i(2)})  \leq ... \leq \psi_{\mathrm{Q}}(M_{i(n1)}), \]
\[ \fl \qquad k > j(1)>j(2)>...>j(n2) \geq 0, \quad \psi_{\mathrm{Q}}(M_{j(1)}) \leq \psi_{\mathrm{Q}}(M_{j(2)})  \leq ... \leq \psi_{\mathrm{Q}}(M_{j(n2)}). \] 
It is easy to see that $i(n1)=l(k)$ and $j(n2)=r(k)$.  For approximating the error function $\tilde e_1(\alpha)$ we propose to connect points $P(M_{i(n1)}),...,P(M_{i(1)}),P(m_k), P(M_{j(1)}),...,P(M_{j(n2)}) $ by broken line  $t_2(\alpha)=g[M_{i(n1)},...,M_{i(1)},m_k,M_{j(1)},...,M_{j(n2)}](\alpha)$  and to find for every $m_k$ the area $S_2(k)$ of polygon surrounded by lines 
$T_2(\alpha)=\max \{t_2(\alpha),g[M_{i(n1)},M_{j(n2)}](\alpha)\}$ and $t_2(\alpha)$.
The second possibility is to approximate the error function $\tilde e_1(\alpha)$ by the curve  $t_3(\alpha)=\max\{t_2(\alpha), \tilde \psi_{\mathrm{Q}}(\alpha)\}$  and to find  $S_3(k)$ as the area of polygon surrounded by broken lines $t_3(\alpha)$ and curve  $T_3(\alpha)=\max\{T_2(\alpha), \tilde \psi_{\mathrm{Q}}(\alpha)\}$.  
Note that functions $t_i(\alpha), i=1,2,3$ are monotonically increasing if $\alpha>m_k$, and monotonically decreasing if $\alpha<m_k$. 

\textbf{Area rules 2 and 3.} We fix constant $c_0, 1 \leq c_0 \leq 2$.  First we choose local minimizer  $ m_k \leq \alpha_{HQ}$, for which the area $S_i(k), \ i \in \{2,3\}$ is largest. We take for the regularization parameter the smallest  $ m_{k_0}  \leq m_k$, satisfying the condition (compare with (\ref{Gdef})) 
\[\frac{\psi_{\mathrm{Q}}(\alpha')}{\psi_{\mathrm{Q}}(\alpha)} \leq c_0 \qquad \forall \alpha, \alpha' \in \Omega, \quad m_{k_0} \leq \alpha' < \alpha \leq m_k. \] 
 
Let us consider Figure 9. The reason of failure of triangle angle rule is, that for local minimizer $m(2)$ the broken line $g[M(0),m(2),M(2)](\alpha)$ do not approximate well the function  $\tilde e_1(\alpha)$, while point $M(1)$ is located above the interval $[m(2),M(0)]$. Here the function $\tilde e_1(\alpha)$ is better approximated by the broken line $g[M(0),M(1),m(2),M(2)](\alpha)$, see Figure 10. For local minimizer $m(1)$ we approximate function $\tilde e_1(\alpha)$ by broken line  $g[M(0),m(1),M(1),M(2)](\alpha)$ and due to the inequality $S_2(1)>S_2(2)$ rule 2 chooses $m_1$ for the regularization parameter, then $E=1.05$. 

Area rules 2 and 3 work in problem \emph{heat} well for every $n$ and all $\alpha_N =10^{-k}, 12 \leq k \leq 24 $, but in some other problems accuracy of area rules 2 and 3 (see columns 4-7 of Tables 5, 6) is slightly worser than for rule TA-2. The advantage of area rule 3, as compared to area rule 2, is to be highlighted in problem \emph{heat} if all noise of the right hand side is placed on one eigenelement (then we use the condition $ m_k \leq \alpha_{HQ}$ only in case $\lambda_{\mathrm{min}}>\alpha_N$). Then area rule 3 did not fail if $n \geq 80$ and $\alpha_N  \leq 10^{-20}$. So we can say, the more precisely we take into account the form of the Q-curve in construction of the approximating function for the error function $\tilde e_1(\alpha)$, the more stable is the rule.

\begin{table}
\caption{Averages and maximums of error ratios E in case of area rules, problem set 1}
\label{tab:5}       
\begin{tabular}{l|cc|cc|cc|cc}
\br
Problem  & \multicolumn{2}{c|}{TA-2 rule} &\multicolumn{2}{c|}{Area rule 2} & \multicolumn{2}{c|}{Area rule 3}  & \multicolumn{2}{c}{Combined area rule}\\
 & Aver E & Max E  & Aver E & Max E & Aver E & Max E & Aver E & Max E\\
\mr
Baart       & 1.51 & 5.18 & 1.58 & 2.91  & 1.59 & 2.91  & 1.53 & 5.18\\
Deriv2     & 1.12 & 1.42 & 1.12 & 1.42  & 1.12 & 1.42  & 1.12 & 1.42  \\
Foxgood   & 1.57 & 6.69 & 1.53 & 6.19  & 1.53 & 6.19  & 1.57 & 6.69\\
Gravity     & 1.17 & 4.12 & 1.21 & 6.10 & 1.21 & 6.10  & 1.17 & 4.12\\
Heat        & 1.12 & 2.36 & 1.12 & 2.36 & 1.12 & 2.36 & 1.12 & 2.36  \\
Ilaplace    & 1.24 & 2.68 & 1.22 & 2.68 & 1.22 & 2.68 & 1.24 & 2.68\\
Phillips      & 1.07 & 1.72 & 1.06 & 1.72 & 1.06 & 1.72 & 1.07 & 1.72 \\
Shaw       & 1.42 & 3.72 & 1.47 & 3.64 & 1.47 & 3.64 & 1.42 & 3.72\\
Spikes      & 1.01 & 1.05 & 1.01 & 1.02 & 1.01 & 1.02 & 1.01 & 1.05 \\
Wing        & 1.39 & 1.86 & 1.44 & 1.86 & 1.44 & 1.86 & 1.39 & 1.86\\
Baker        & 3.30 & 45.29 & 2.67 & 22.67 & 2.67 & 22.67 & 2.91 & 33.12 \\
Ursell        & 2.87 & 16.78 & 4.55 & 27.92 & 4.55 & 27.92 & 3.12 & 16.78\\
Indramm   & 3.74 & 25.67 & 9.50 & 83.20 & 10.76 & 83.20 & 3.87 & 25.67 \\
Waswaz2    & 2.43 & 9.01 & 2.43 & 9.01 & 2.43 & 9.01 & 2.43 & 9.01\\
Groetsch1 & 1.14 & 2.12 & 1.15 & 2.12 & 1.15 & 2.12 & 1.14 & 2.12 \\
Groetsch2 & 1.52 & 3.84 & 1.52 & 3.84 & 1.52 & 3.84 & 1.52 & 3.84\\
\mr
Total  & 1.73 & 45.29 & 2.16 & 83.20  & 2.22 & 83.20  & 1.73 & 33.12\\
\br
\end{tabular}
\end{table}
Based on the above rules it is possible to formulate a combined rule, which chooses the parameter according to the rule TA-2 or area rule 3 in dependence of certain condition.

\textbf{Area rule 4 (Combined area rule).} Fix constant $c_0, 1 \leq c_0 \leq 2,  b \geq 0$.   Let local minimizer $m_k$ be chosen by the rule TA-2.  If 
\[\max_{m_k \leq \alpha \leq M_{r(k)}} \frac{\tilde \psi_{\mathrm{Q}}(\alpha)}{g[m_k,M_{r(k)}](\alpha)} \leq b, \]
we take $m_k$ for the regularization parameter, otherwise we choose regularization parameter by rule 3. 

Note that combined rule coincides with rule TA-2, if  $b=0$ and with area rule 3, if  $b=\infty$. Experiments of combined rule with $c_0=2, b=1$ (columns 8 and 9 in Tables 5, 6)) show that accuracy of this rule is  almost the same as in triangle rule, but unlike the TA-2 rule, it works well also in the problem \emph{heat} for all $n$ and $\alpha_N$. Although, in some cases, in test set 3 the error ratio $E>100$ for rule 4,  the high qualification of the rule is characterized by fact, that over all problems sets 1-3  the largest error ratio E1 was 16.91
(5.06 for set 1) and the largest error ratio E2 was 4.67 (2.62 for set 1). Numerical experiments show that it is reasonable to use parameter $b \in( 0.8,1.2]$. 
We studied the behavior of area rules for different $\alpha_N=10^{-k}, 12 \leq k \leq 24 $. The results were similar to results of Tables 5 and 6, but for smaller  $\alpha_N$ the error ratios were 2-3\% smaller than for  $\alpha_N=10^{-18}$ and for larger  $\alpha_N$ the error ratios were about 5\% larger than in Tables 5, 6. 

\begin{table}
\caption{Averages and maximums of error ratios E in proposed rules, problem sets 2 and 3.} 
\label{tab:6}       
\begin{tabular}{l|cc|cc|cc|cc}
\br
Problem  & \multicolumn{2}{c|}{TA-2 rule} &\multicolumn{2}{c|}{Area rule 2} & \multicolumn{2}{c|}{Area rule 3}  & \multicolumn{2}{c}{Combined area rule}\\
 & Aver E & Max E  & Aver E & Max E & Aver E & Max E & Aver E & Max E\\
\mr
Gauss    & 1.24 & 5.05 & 1.26 & 6.56  & 1.26 & 6.56  & 1.24 & 5.05\\
Hilbert   & 1.46 & 7.25 & 1.83 & 21.22  & 1.81 & 21.22  & 1.46 & 7.25  \\
Lotkin    & 1.47 & 11.17 & 1.91 & 18.66  & 1.88 & 11.17  & 1.47 & 11.17\\
Moler     & 1.51 & 7.35 & 1.43 & 7.35 & 1.43 & 7.35  & 1.51 & 7.35\\
Prolate   & 1.57 & 15.96 & 1.82 & 20.64 & 1.77 & 15.96 & 1.58 & 15.96  \\
Pascal    & 1.04 & 1.13 & 1.06 & 1.18 & 1.06 & 1.18 & 1.05 & 1.18\\
\mr
Set 2  & 1.38 & 15.96 & 1.55 & 21.22 & 1.53 & 21.22  & 1.39 & 15.96\\
Set 3  & 1.85 & 136.6 & 2.81 & 188.1 & 2.77 & 188.1  & 2.02 & 153.5\\
\br
\end{tabular}
\end{table}

The Table 7 gives results of the numerical experiments in the case of smooth solution,
$p=2$.  We see that combined rule worked well also in this case, no failure.
\begin{table}
\caption{Results of the numerical experiments, $p=2$}
\label{tab:7}       
\begin{tabular}{lc|c|c|c|c|cc}
\br
Problem & ME & MEe & DP  & Best of $L_{\mathrm{min}}$ &  $|L_{\mathrm{min}}|$  & \multicolumn{2}{c}{Combined area rule}\\
 & Aver E & Aver E & Aver E & Aver E  & Aver & Aver E & Max E \\
\mr
Baart       & 1.86 & 1.19  & 2.93 & 1.18  & 4.74 & 1.60 & 14.57\\
Deriv2      & 1.09 & 1.19 &  3.65 & 1.03  & 2.00 & 1.04 & 1.17\\
Foxgood    & 1.56 & 1.13 &  3.58 & 1.14  & 2.08 & 1.22 & 3.58\\
Gravity      & 1.33 & 1.05  &  2.65 & 1.09 & 1.72 & 1.14 & 3.18\\
Heat         & 1.13 & 1.12 & 2.55 & 1.05 & 2.10 & 1.05 & 1.14\\
Ilaplace     & 1.47 & 1.06 & 2.78 & 1.11  & 2.73 & 1.13 & 3.51\\
Phillips       & 1.26 & 1.06 & 3.35 & 1.04  & 2.10 & 1.04 & 1.20\\
Shaw         & 1.37 & 1.06 & 2.58 & 1.11  & 3.72 & 1.29 & 8.96\\
Spikes        & 1.85 & 1.12  & 2.10 & 1.19  & 4.78 & 1.31 & 5.75 \\
Wing          & 1.67 & 1.14 & 2.47 & 1.22  & 4.53 & 1.75 & 6.63\\
Baker          & 2.11 & 1.29 & 2.96 & 1.21 & 4.38 & 1.77 & 11.33\\
Ursell          & 1.86 & 1.19 & 4.10 & 1.16  & 4.82 & 1.67 & 18.08\\
Indramm     & 1.69 & 1.14 & 2.87 & 1.28  & 4.53  & 1.91 &6.42\\
Waswaz2     & 127.2 & 49.8 & 1.20 & 2.44  & 1.00 & 2.43 & 9.01\\
Groetsch1  & 1.40 & 1.06  & 2.36 & 1.11 & 2.14 & 1.14 & 4.56\\
Groetsch2  & 1.02 & 1.23 & 1.71 & 1.14 & 1.67 & 1.55 & 3.97 \\
\mr
Set 1   & 9.37 & 4.18 & 2.74 & 1.22 & 3.06 & 1.44 & 18.08\\
Set 2   & 2.10 & 1.26 & 2.91 & 1.19  & 2.83 &1.37 & 29.03\\
Set 3   & 6.86 & 3.21 & 2.68 & 1.18 & 3.12 & 1.42 & 52.98\\
\br
\end{tabular}
\end{table}

\begin{remark}
It is possible to modify the Q-curve. We may use the function $\psi_{\mathrm{QD}}(\alpha)$ instead of function $\psi_{\mathrm{Q}}(\alpha)$ and find proper local minimizer of the function $\psi_{\mathrm{QD}}(\alpha)$.
Unlike the quasi-optimality criterion the use of function $\psi_{\mathrm{QD}}(\alpha)$ in the Q-curve and in the area rule does not increase the amount of calculations, while approximation $u_{2,\alpha}$ is needed also in computation of $d_{\mathrm{MD}}(\alpha)$. We can use in these rules the function $d_{\mathrm{ME}}(\alpha)$ instead of $d_{\mathrm{MD}}(\alpha)$, it increases the accuracy in some problems, but the average accuracy of the rules is almost the same.
In case of nonsmooth solutions we can modify the Q-curve method and area rule, using the function $d_{D}(\alpha)$ instead of $d_{\mathrm{MD}}(\alpha)$. In this case, we get even better results for $ p = 0 $ but for $ p = 2 $, the error ratio $ E $ is on average 2 times higher.

Note that if  solution is smooth, then L-curve rule and Reginska's rule often fail, but replacing in these rules 
the functions $Au_\alpha-f$ and $u_\alpha=-\alpha^{-1}A^*(Au_\alpha-f)$ by functions  $B_\alpha(Au_\alpha-f)$ and 
$-\alpha^{-1}A^*B^2_\alpha (Au_\alpha-f)$ (then Reginska's rule modifies to minimization of the function $\psi_{\mathrm{WQ}}(\alpha)$ (see (\ref{WQ})) respectively gives often better results.
\end{remark} 

In the case of a heuristic parameter choice, it is also possible to use the a posteriori estimates of the approximate solution, which, in many tasks, allows
to confirm the reliability of the parameter choice. Let $\alpha_H$ be the regularization parameter from some heuristic rule and $\alpha_*$ be the local minimizer of the function $e_1(\alpha)$ on the set $\Omega$. Then in case $\alpha_* \geq \alpha_H$ the error estimate
\begin{equation}  \label{eq22}
\n{u_{\alpha_H}-u_*} \leq \s{1+T(\alpha_H, \alpha_*)}e_1(\alpha_*) \leq q^{-1}\s{1+T_1(\alpha_H)} \min_{\alpha} e_1(\alpha)
\end{equation} 
holds where $T_1(\alpha_H)=\max_{\alpha \geq \alpha_H, \alpha \in \Omega} T(\alpha_H, \alpha)$.
Using the last estimate, we can prove  similarly to the Theorem 7 that if $\alpha_N$
is so small that $d_{\mathrm{MD}}(\alpha_N) \leq (1+\epsilon) \n{f-f_*}$, then
\[  \n{u_{\alpha_H}-u_*} \leq \max\{q^{-1}(1+T_1(\alpha_H)) \min_{\alpha \geq 0} e_1(\alpha), C_2(b,\epsilon) \min_{\alpha \geq 0}  e_2(\alpha,\n{f-f_*})\}, \]
where $b=d_{\mathrm{MD}}(\alpha_H)/d_{\mathrm{MD}}(\alpha_N)$. If values $T_1(\alpha_H)$ and $b$ what we find a posteriori, are small (for example $b \leq 2$ and $T_1(\alpha_H) \leq 9$), then this estimate allows to argue that error of approximate solution for this parameter is not much larger than the minimal error.
The conditions $b \leq 2$ , $T_1(\alpha_H) \leq 9$ were satisfied in set 1 of test problems in combined rule for 73\% of cases and inequalities $b \leq 2$ , $T_1(\alpha_H) \leq 4$ for 61\% of cases. The reason of failure of heuristic rule is typically that chosen parameter is too small.  To check this, we can use the error estimate (\ref {eq22}). If $T_1(\alpha_H)$ is relatively small (for example $T_1(\alpha_H) \leq 9$), then estimate (\ref{eq22}) allows to argue that the regularization parameter is not chosen too small. In set 1 of test problems the conditions  $T_1(\alpha_H) \leq 9$ and $T_1(\alpha_H) \leq 4$ were satisfied in 97\% and in  82\% of cases respectively. 

We finish the paper with the following conclusion. For the heuristic choice of the regularization parameter we recommend to choose the parameter from the set of local minimizers of the function $\psi_{\mathrm{Q}}(\alpha)$ or the function  $\psi_{\mathrm{QD}}(\alpha)$. For choice of the parameter from the local minimizers we proposed the Q-curve method and different area rules. The proposed rules gave  much better results than  previous heuristic rules on extensive set of test problems. Area rules fail in very few cases in comparison with previous rules, and the accuracy of these rules is comparable even with the $\delta$-rules  if the exact noise level is known. In addition, we also provided a posteriori error estimates of the approximate solution, which allows to check the reliability of parameter chosen heuristically.

\section*{Acknowledgment}

The authors are supported 
by institutional research funding IUT20-57
of the Estonian Ministry of Education and Research. 

\section*{References}

\end{document}